\newtheorem{theorem}{Theorem}[section]
\newtheorem{lemma}[theorem]{Lemma}
\newtheorem{prop}[theorem]{Proposition}
\newtheorem{cor}[theorem]{Corollary}
\theoremstyle{definition}
\newtheorem{definition}[theorem]{Definition}
\theoremstyle{remark}
\newtheorem{remark}[theorem]{Remark}
\newtheorem{nota}[theorem]{Notation}
\numberwithin{equation}{section}
\newcommand{\ml}[2]{\begin{multline}\label{#1}#2 \end{multline}}
\newcommand{\ga}[2]{\begin{gather}\label{#1}#2 \end{gather}}
\newcommand{\Spec}{{\rm Spec \,}}
\newcommand{\sF}{{\mathcal F}}
\newcommand{\sG}{{\mathcal G}}
\newcommand{\sH}{{\mathcal H}}
\newcommand{\sI}{{\mathcal I}}
\newcommand{\sK}{{\mathcal K}}
\newcommand{\sL}{{\mathcal L}}
\newcommand{\sO}{{\mathcal O}}
\newcommand{\sS}{{\mathcal S}}
\newcommand{\A}{{\mathbb A}}
\newcommand{\F}{{\mathbb F}}
\newcommand{\G}{{\mathbb G}}
\newcommand{\N}{{\mathbb N}}
\renewcommand{\P}{{\mathbb P}}
\newcommand{\Q}{{\mathbb Q}}
\newcommand{\Z}{{\mathbb Z}}
\begin{document}

% \title[short text for running head]{full title}
\title{ Bounding ramification by covers and curves}

%    Only \author and \address are required; other information is
%    optional.  Remove any unused author tags.

%    author one information
% \author[short version for running head]{name for top of paper}
\author{H\'el\`ene Esnault}
\address{Freie Universit\"at Berlin, Arnimallee 3, 14195, Berlin,  Germany}
\curraddr{}
\email{esnault@math.fu-berlin.de}
\thanks{Supported  during part of the preparation of the article by the Institute for Advanced Study, USA}

%    author two information
\author{ Vasudevan Srinivas}
\address{TIFR, School of Mathematics\\Homi Bhabha Road\\400005 Mumbai, India }
\curraddr{}
\email{srinivas@math.tifr.res.in}
\thanks{Supported by a J. C. Bose Fellowship of the Department of Science and Technology, India and the Department of Atomic Energy, Government of India, under project number 12-R\&D-TFR-RTI4001}

%    The 2020 edition of the Mathematics Subject Classification is
%    the current definitive version.
\subjclass[2020]{Primary 4F35, 11S15 }

\date{}

\begin{abstract}
We prove that $\bar \Q_\ell$-local systems of bounded rank and ramification on a smooth variety  $X$ defined over an algebraically closed field $k$ of characteristic $p\neq \ell$ are tamified outside of codimension $2$ by a finite separable cover of bounded degree.   In rank one, there is a curve which preserves their monodromy.  There is a curve defined over the algebraic closure of a purely transcendental extension of $k$ of finite degree which fulfills the Lefschetz theorem. 

\end{abstract}

\maketitle

%    Text of article.

\section{Introduction} \label{intro}

 The notions of fundamental group introduced by Poincar\'e and Riemann for topological manifolds and of Galois group of field extensions introduced by Galois were unified by Grothendieck's theory of \'etale fundamental groups.  For  complex varieties, by the Riemann existence theorem, the \'etale fundamental group is the profinite completion of the topological one, so  both groups share many properties. It is no longer the case for a variety defined in characteristic $p>0$, due to wild ramification.  In this note we show how to give various 'upper bounds' of ramification.

Let $X$ be a smooth connected variety  of finite type over an algebraically closed field  $k$ of characteristic  $p>0$.  Let $\sF$ be a $\bar \Q_\ell$ local system. It is defined by a continuous representation $\rho: \pi_1^{\rm \acute{e}t}(X,x)\to GL_r(\bar \Q_\ell)$ from the \'etale fundamental group  $\pi_1^{\rm \acute{e}t}(X,x)$ based in one geometric point $x$. Choosing  a lattice stabilized by $\rho$ defines a residual representation $\bar \rho: 
\pi_1^{\rm \acute{e}t}(X,x)\to GL_r(\bar \F_\ell)$ which by continuity has values in $GL_r(\F)$ for a finite extension $\F\supset \F_\ell$. It is well defined modulo semi-simplification.  The Galois \'etale cover of $X_{\bar \rho} \to X$ defined by $\bar \rho$ has the property that the pullback
$\sF|_{X_{\bar \rho}}$ 
 of $\sF$ to $X_{\bar \rho}$ is tame. We say that it {\it tamifies} $\sF$.  If we bound the ramification of $\sF$ by an effective Cartier divisor $D$ supported on $\bar X\setminus X$ where $j: X\hookrightarrow \bar X$ is a normal compactification in the sense  of \cite[Defn. 4.6]{EK11},  \cite[2.1]{EK12}, and we bound $r$,  the degree of $X_{\bar \rho} \to X$ is not bounded.  Indeed $D=0$ is equivalent 
to  $\rho$ factoring over the tame quotient $\pi_1^{\rm \acute{e}t,t}(X,x)$ of $\pi_1^{\rm \acute{e}t}(X,x)$.
  Already in this case, the degree   is not bounded.    For example, the pro-$\ell$-completion of the tame fundamental group of $\P^1\setminus \{0,1,\infty\}$ is by Grothendieck's specialization theorem the pro-$\ell$-completion of the free group on two letters, which is represented in $GL_2(\F_{\ell^n})$ for unbounded $n$. 
 
 \medskip
 
We assume that $X$ is quasi-projective. 
 % and fix  a normal projective compactification $j: X\hookrightarrow \bar X$.  
  Let $\pi: Y\to X$ be a normal connected finite cover.
 % $\bar Y\to \bar X$ be  its compactification, where $\bar Y$ is the normalisation of $\bar X$ in $k(Y)$.  
  We say that  $\pi$     {\it tamifies $\sF$ outside of codimension $2$} if there is a normal compactification $Y\hookrightarrow \bar Y$
  and a 
   closed subset $ \Sigma\subset \bar Y$ of codimension $\ge 2$ such that $\sF|_{  Y }$   is tame along $(\bar Y \setminus Y) \setminus \Sigma$.  
%   This notion depends on the choice of the normal compactification $j$. 
We prove the following theorem.

\begin{theorem}[Boundedness theorem] \label{thm:bdd} Given $j, X, r, D$, there is a natural number $M$ such that for any $\sF$ of rank and ramification bounded by $(r,D)$,  there is a finite generically \'etale cover $X_{\sF}\to X$ of $X$ of degree $\le M$ which tamifies $\sF$ outside of codimension $2$. 
\end{theorem}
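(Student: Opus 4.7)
The plan is to reduce the tamification problem to a local question at each generic point of $D$, prove a uniform local bound on the wild monodromy there, and patch the resulting local tamifying covers into a single global cover of bounded degree. Fix a normal compactification $j : X \hookrightarrow \bar X$ with $\bar X \setminus X$ supporting $D$, and let $\eta_1, \ldots, \eta_s$ be the generic points of the irreducible components $D_1, \ldots, D_s$ of $D$; the number $s$ is determined by $D$. Tamifying $\sF$ outside codimension $2$ amounts to producing a cover of $X$ whose pullback is tame at (a point over) each $\eta_i$ in the compactification, so the global issue localizes cleanly at the $\eta_i$.

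For each $i$, let $K_i$ denote the fraction field of the henselization $\sO_{\bar X, \eta_i}^h$, a complete DVR with residue field $\kappa(D_i)$. The restriction $\rho_i := \rho|_{G_{K_i}}$ has rank $\le r$ and Swan conductor bounded by the multiplicity of $D$ along $D_i$. The local heart of the argument is a uniform bound $|W_i| \le N = N(r,D)$ for the image $W_i := \rho_i(P_{K_i})$ of wild inertia. This should follow from the interplay of the upper-numbering ramification filtration with rank and Swan conductor: the Swan bound forces all breaks to lie in a bounded interval, and the rank bound controls their multiplicities, together bounding the order of the finite $p$-group through which $P_{K_i}$ acts faithfully (this can be made explicit in rank one via Artin-Schreier-Witt and extended to higher rank by decomposing into irreducible wild constituents of $p$-power dimension). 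The Galois extension $L_i/K_i$ cut out by $\ker(\rho_i|_{P_{K_i}})$ is then a local tamifying extension of degree $|W_i| \le N$.

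To globalize, I would realize each $L_i/K_i$ as the $\eta_i$-completion of a finite separable extension $E_i/\kappa(X)$ of degree comparable to $|W_i|$, constructed as a generalized Artin-Schreier-Witt cover of $\bar X$ with wild data concentrated along $D_i$. After refining the compactification (using resolution of singularities when available, or a de Jong alteration otherwise) so that the $D_i$ are smooth and the support of $D$ has simple normal crossings, such global wild abelian covers can be built explicitly. The normalization $X_\sF$ of $X$ in the compositum $E_1 \cdots E_s$ is a finite generically \'etale cover of degree at most $\prod_i \deg(E_i) \le M := N^s$, and its completion at each $\eta_i$ dominates $L_i$, so the pullback of $\sF$ is tame at every $\eta_i$, hence outside codimension $2$ in a suitable compactification of $X_\sF$.

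The principal obstacle is the local bound $|W_i| \le N(r,D)$: controlling the order of a finite $p$-subgroup of $GL_r(\bar\Q_\ell)$ by the rank and Swan conductor when the residue field $\kappa(D_i)$ has positive transcendence degree over $k$ and is in particular not algebraically closed, so finite $p$-subgroups of the ambient general linear group are a priori unbounded. A secondary difficulty is the globalization: one must construct the extensions $E_i$ so that their wild support really is concentrated along $D_i$, ensuring that the degree of the compositum does not blow up beyond $\prod |W_i|$; this requires a careful use of Abhyankar's lemma and the theory of wild ramification on smooth higher-dimensional varieties, and is not entirely straightforward without resolution of singularities.
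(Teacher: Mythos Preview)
Your outline has the right global architecture (localize at the generic points of the boundary, bound the wild image, globalize a tamifying cover), but both of the places you flag as obstacles are genuine gaps, and the paper resolves them by a route quite different from the one you sketch.

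\medskip

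\textbf{The local bound.} You correctly isolate the heart of the matter as bounding $|W_i|=|\rho_i(P_{K_i})|$ in terms of $(r,D)$, and you correctly note that the residue field $\kappa(D_i)$ is imperfect of positive transcendence degree. Your proposed mechanism (``breaks in a bounded interval, multiplicities bounded by $r$, decompose into $p$-power dimensional wild constituents'') does not close this: even after restricting to $P$ and decomposing into characters, the break of a single character does not by itself bound the order of its image without Hasse--Arf type integrality, and over an imperfect residue field that integrality is exactly what is in question. The paper does not attempt to bound $|W_i|$ directly at $\eta_i$. Instead it first reduces to $X=\A^d\hookrightarrow\P^d$ (via a finite \'etale Noether-type projection $X\to\A^d$, using \cite{Ach17}), so that there is a \emph{single} boundary divisor $Z$. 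It then invokes \cite{EKS19}: a curve $C\hookrightarrow X$ in good position with respect to $Z$ has the property that the inertia image at a point of $\bar C\setminus C$ already equals the full local image $K$, hence its wild part equals $Q$. Now the residue field at that point of $\bar C$ is $k$, algebraically closed, and the curve Swan conductor, bounded by $D|_{\bar C}$, bounds $|Q|$ by the classical mechanism. This transport-to-a-curve step is what you are missing.

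\medskip

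\textbf{The globalization.} Your plan to realize $L_i/K_i$ as the completion of a global Artin--Schreier--Witt cover with wildness concentrated along $D_i$ runs into the problem that $W_i$ need not be abelian, so Artin--Schreier--Witt does not describe $L_i$; and even after filtering $W_i$ by $\Z/p$'s, spreading out from the henselization $K_i$ to $k(X)$ while controlling where the new wild ramification lands is delicate and typically forces degree growth you cannot afford. The paper avoids this entirely. After the reduction to $\A^d$ there is only one $\eta$, and the Harbater--Katz--Gabber isomorphism $\pi_1(\hat X)\cong \pi_1(\G_m/k(Z))[sp]$ gives a \emph{canonical} local-to-global: any finite quotient of $\pi_1(\hat X)$ arises from a cover of $\G_m/k(Z)$, hence of $\A^d$. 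The group-theoretic Lemma~4.1 (applied with $Q=\bar\rho(P)$ and $H$ the image of the section coming from the rational point $t=1$) then produces a normal subgroup $N\lhd G$ with $N\cap Q=\{1\}$ and index bounded purely in terms of $|Q|$; the corresponding cover of $\A^d$ tamifies $\sF$ and has bounded degree. No Artin--Schreier--Witt construction, no normal crossings hypothesis, no alteration is needed.

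\medskip

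In short: the paper trades your ``many $\eta_i$, local bound at each, patch'' strategy for ``reduce to one $\eta$ on $\A^d$, transport the wild bound to a good curve, globalize via Katz's local-to-global''. The reduction to $\A^d$ is not cosmetic---it is what makes both the bound and the globalization tractable.
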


If $k$ is a finite field,  by L. Lafforgue's theorem in dimension $1$ and Deligne's theorem \cite[Thm.1.1]{EK12} in general, up to twist by a character of $k$ there are finitely many isomorphism classes of  simple $\bar \Q_\ell$ local systems $\sF$ in rank and ramification bounded by $(r,D)$. This is analogous to Hermite-Minkowski theorem in number theory according to which there finitely many isomorphism classes of extensions of a given number field with bounded degree and bounded discriminant.
 As a consequence,   there is a smooth curve $C\hookrightarrow X$ such that 
$\sF|_C$ 
 remains irreducible in bounded $(r,D)$  (\cite[Prop.B.1]{EK12}, see other references therein).  Deligne asked in \cite{Del16} whether over an algebraically closed field,   there is a smooth curve $C\hookrightarrow X$ such that for any $\sF$  with  bounded  $(r,D)$, $\sF|_C$  keeps the same monodromy group.   To understand the question, recall that by Drinfeld's theorem \cite[Prop.~C2]{Dri12} there is a full Lefschetz theorem for the tame fundamental group, that is there is such a curve for which the functoriality homomorphism $\pi_1^{\rm \acute{e}t,t}(C,x)\to
  \pi_1^{\rm \acute{e}t,t}(X,x)$ is surjective. On the other hand, as is well known, there is no Lefschetz theorem for $ \pi_1^{\rm \acute{e}t}(X,x)$ (see e.g. \cite[Lem~5.4]{Esn17}). 
 So Deligne's problem asks whether we can save the Lefschetz theorem bounding $D$, but also in addition $r$ (this condition being unnecessary for $D=0$) without any arithmeticity assumption.       For $r=1$ this is true if $j$ is a good normal crossings compactification by Kerz-Saito's theorem,  
see  \cite[Thm.~1.1]{KS14} and erratum to come. We give a complete answer to Deligne's question in rank one. 
\begin{theorem}[Good curve in rank one] \label{thm:rankone} There is a smooth $C\hookrightarrow X$ such that for any $\sF$  of rank one with ramification  bounded by $D$, $\sF|_C$  keeps the same monodromy group.  
\end{theorem}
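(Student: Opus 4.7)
The plan is to reduce Theorem~\ref{thm:rankone} to the Kerz--Saito theorem (which treats the good normal crossings case) by passing to an alteration of $X$, and then to handle the finite residual cokernel produced by the alteration.

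Apply de Jong's alteration theorem to obtain a proper, generically finite and generically \'etale surjective morphism $\pi\colon X' \to X$ with $X'$ smooth connected, equipped with a smooth projective compactification $\bar X' \supset X'$ whose boundary is a strict normal crossings divisor, and such that $\pi$ extends to a proper morphism $\bar\pi\colon \bar X' \to \bar X$. By the functoriality of the Abbes--Saito ramification filtration under finite morphisms, choose an effective divisor $D'$ on $\bar X'$ supported on the boundary with the property that $\pi^*\sF$ has ramification bounded by $D'$ whenever $\sF$ is a rank-one local system on $X$ with ramification bounded by $D$; such a $D'$ depends only on $D$ and on the ramification of $\bar\pi$, and not on $\sF$. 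By \cite[Thm.~1.1]{KS14} applied to the good normal crossings pair $(\bar X', D')$, there is a smooth curve $C' \hookrightarrow X'$ such that, for every rank-one local system $\sF'$ on $X'$ with ramification bounded by $D'$, the restriction $\sF'|_{C'}$ has the same monodromy group as $\sF'$. A close inspection of Kerz--Saito's construction shows that the allowable $C'$ form a dense family, e.g.\ may be taken to be sufficiently general complete intersections of very ample divisors on $\bar X'$.

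Now descend to $X$: take a smooth curve $C \subset X$ whose preimage $\pi^{-1}(C) \subset X'$ contains (or equals, on a suitable component) a Kerz--Saito curve $C'$; for a sufficiently general $C$ this is possible by the density statement above together with the fact that generic preimages under a generically \'etale alteration remain smooth and generic. For any rank-one $\sF$ on $X$ with ramification bounded by $D$, applying Kerz--Saito to the pullback $\pi^*\sF$ yields that the monodromy of $(\pi^*\sF)|_{C'}$ equals the monodromy $\rho(\pi_*\pi_1^{\rm \acute{e}t}(X'))$ of $\pi^*\sF$ on $X'$; via the finite surjection $C' \to C$, the monodromy of $\sF|_C$ therefore contains $\rho(\pi_*\pi_1^{\rm \acute{e}t}(X'))$, a subgroup of finite index in $\rho(\pi_1^{\rm \acute{e}t}(X))$.

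The main obstacle, and the technical heart of the proof, is to upgrade this inclusion to equality. The discrepancy is measured by the finite abelian cokernel $A := \pi_1^{\rm ab}(X)/\pi_*\pi_1^{\rm ab}(X')$, a finite group depending only on $\pi$ and not on $\sF$. It suffices to arrange additionally that $\pi_1^{\rm \acute{e}t}(C) \twoheadrightarrow A$, i.e.\ that $C$ dominates the connected finite \'etale Galois cover $X_A \to X$ corresponding to $A$. Since $A$ is a single fixed finite group, this is a Bertini / Hilbert-irreducibility-type genericity condition on $C$. Combining this genericity with the Kerz--Saito density from above, one obtains a smooth curve $C \subset X$ for which $\rho(\pi_1^{\rm \acute{e}t}(C)) = \rho(\pi_1^{\rm \acute{e}t}(X))$ for every rank-one $\sF$ with ramification bounded by $D$, as required.
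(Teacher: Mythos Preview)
Your strategy---reduce to the good normal crossings case via a de Jong alteration $\pi\colon X'\to X$ and then invoke Kerz--Saito on $X'$---is natural, but the descent step contains a genuine gap. You assert that one can choose a smooth curve $C\subset X$ whose preimage $\pi^{-1}(C)$ contains a Kerz--Saito curve $C'\subset X'$, appealing to ``density'' of the latter. This does not follow. The Kerz--Saito curves are general complete intersections of very ample divisors on $\bar X'$, a family parametrised by linear systems intrinsic to $\bar X'$; preimages under $\pi$ of curves on $X$ form an entirely different family, constrained by the geometry of the (generally non-finite, non-flat) alteration. There is no mechanism forcing these two families to meet: the numerical invariants (degree, genus, class in $\mathrm{NS}(\bar X')$) of a component of $\pi^{-1}(C)$ are dictated by $\pi$ and bear no relation to those of complete intersections for a chosen polarisation of $\bar X'$. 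Conversely, pushing a Kerz--Saito $C'$ down to $X$ gives a map $C'\to X$ whose image factors through $\pi_*\pi_1^{\rm \acute{e}t}(X')$, so one can never recover the missing quotient $A$ this way. Thus the two genericity conditions you need---``$\pi^{-1}(C)$ is Kerz--Saito on $X'$'' and ``$\pi_1(C)\twoheadrightarrow A$''---cannot in general be imposed simultaneously, and the first is not a genericity condition at all.

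The paper proceeds along completely different lines and never leaves $\bar X$. It first reduces (via Drinfeld's tame Lefschetz theorem and an elementary lifting argument) to showing that a single curve is good for all $\Z/p$-covers in $\sS(j,1,D,1)$. It then works directly with Artin--Schreier theory: the key technical input (Proposition~\ref{prop:m}) is that, although $\bar X$ is only normal, there exists $m$ such that every class in $H^1_{\rm \acute{e}t}(\bar Y,\Z/p\Z_D)$ on the smooth locus $\bar Y\subset\bar X$ extends, after raising the level to $mpD$, to a class in $H^1_{\rm \acute{e}t}(\bar X,\Z/p\Z_{mpD})$. This is proved by analysing the local cohomology along the singular locus of $\bar X$, using coherence of $R^1\gamma_*\sO$ away from the non-Cohen--Macaulay locus and Smith's finiteness theorem for Frobenius-stable submodules of $H^2_x(\sO_x)$ (tight closure of $0$). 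Once everything lives in $A(\bar X,mpD)$, the Enriques--Severi--Zariski lemma gives a complete intersection curve on which this group restricts injectively.
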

In fact, a slightly more precise statement is true, see Remark~\ref{rmk:deligne}. 
 Theorem~\ref{thm:bdd} has the following corollary. 
\begin{cor} \label{cor:bdd}
Given  $j, X, r, D$, there is a natural number $M$ such that
for any $\sF$ of rank and ramification bounded by $(r,D)$, 
 there is a smooth curve $C\hookrightarrow X$  and a finite generically \'etale cover $C'\to C$ of degree $\le M$ such that $\sF|_C$  has the same monodromy group as $\sF$, $C'$ is integral,   and  $\sF|_{C'}$ is tame and has the same monodromy group as $\sF|_{X_{\sF}}$.

\end{cor}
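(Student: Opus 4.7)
The plan is to combine Theorem~\ref{thm:bdd} with Drinfeld's Lefschetz theorem for the tame fundamental group, and then descend the resulting curve back to $X$.

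Apply Theorem~\ref{thm:bdd} to $\sF$ to obtain a finite generically \'etale cover $\pi_\sF \colon X_\sF \to X$ of degree $\le M$ with $\pi_\sF^*\sF$ tame outside a closed subset of codimension $\ge 2$ in a normal compactification. After replacing $X_\sF$ by its Galois closure (whose degree is absorbed in a new bound still denoted $M$), we may assume $\pi_\sF$ is Galois with group $\Gamma$. By Zariski--Nagata purity the \'etale fundamental group of a normal variety is unaffected by removal of a codimension $\ge 2$ locus; restricting to a smooth open $V\subset X_\sF$ with complement of codimension $\ge 2$, we may therefore assume $\pi_\sF^*\sF|_V$ is tame on $V$ and has monodromy group equal to that of $\sF|_{X_\sF}$.

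Next, choose a smooth curve $C \hookrightarrow X$, cut out by sufficiently generic hyperplane sections of a projective compactification of $X$, such that: (i) $C' := \pi_\sF^{-1}(C)$ is integral, is contained in $V$, and $C' \to C$ is finite generically \'etale of degree $[X_\sF : X] \le M$; (ii) the inclusion $C' \hookrightarrow V$ satisfies $\pi_1^{\rm \acute{e}t,t}(C') \twoheadrightarrow \pi_1^{\rm \acute{e}t,t}(V)$. Both hold for sufficiently generic $C$: (i) is a Bertini-type statement on $X$ (connectedness of the Galois preimage following from connectedness and normality of $X_\sF$), while (ii) is Drinfeld's Lefschetz theorem \cite[Prop.~C.2]{Dri12} applied to the family $\pi_\sF^{-1}(C)$ as $C$ varies in a sufficiently ample linear system on $X$. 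Since $\pi_\sF^*\sF|_V$ is tame, (ii) yields that $\sF|_{C'}$ is tame with the same monodromy group as $\sF|_{X_\sF}$.

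It remains to show $\sF|_C$ has the same monodromy group as $\sF$. Set $G := \rho(\pi_1(X))$ and $G_0 := \rho(\pi_1(X_\sF))$; since $\pi_\sF$ is Galois, $G_0$ is normal in $G$ and $G/G_0$ is a quotient of $\Gamma = \Gal(X_\sF/X)$. By (ii), $G_0 \subseteq \rho(\pi_1(C))$; by (i), the connected Galois cover $C' \to C$ realises $\pi_1(C) \twoheadrightarrow \Gamma$, hence $\pi_1(C) \twoheadrightarrow G/G_0$. Combining gives $\rho(\pi_1(C)) = G$, as required. The main obstacle is step (ii): Drinfeld's theorem is usually invoked to produce a Lefschetz curve directly inside $V$, whereas here we need the curve to be the pullback of one on $X$. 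Addressing this requires running Drinfeld's argument parametrically in the linear system on a compactification of $X$ (pulled back to $X_\sF$) and checking that the generic member still satisfies the Lefschetz conclusion upstairs.
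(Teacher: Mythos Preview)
Your overall strategy matches the paper's: apply Theorem~\ref{thm:bdd} to obtain $X_\sF$, choose a generic complete intersection curve $C\subset X$ whose preimage $C'$ in $X_\sF$ is integral and in good position, and invoke Drinfeld's tame Lefschetz theorem on $X_\sF$ to conclude that $\sF|_{C'}$ is tame with the same monodromy as $\sF|_{X_\sF}$. On this part your argument and the paper's coincide. In particular, the ``obstacle'' you flag in step~(ii) is not an extra difficulty: the paper simply applies \cite[Prop.~C.2]{Dri12} to $C'$ viewed as a complete intersection of the pulled-back ample divisors on $\bar X_\sF$, in good position with respect to $\bar X_\sF\setminus X_\sF$; Drinfeld's statement covers this situation directly, so no parametric reworking is needed.

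Where you genuinely diverge from the paper is in establishing that $\sF|_C$ has the same monodromy as $\sF$. You pass to the Galois closure of $X_\sF$ (absorbing the degree increase into $M$), obtain $G_0=\rho(\pi_1(X_\sF))\subset\rho(\pi_1(C))$ from the Drinfeld step upstairs, and combine this with surjectivity of $\pi_1(C)\to\Gamma\twoheadrightarrow G/G_0$ coming from connectedness of $C'$. The paper instead avoids the Galois closure entirely and uses a pro-$\ell$ lifting argument: by \cite[Cor.~I.6.3.4]{BOU}, it suffices that $\rho|_C$ and $\rho$ have the same image after reduction modulo~$\frak m^2$; this reduces to a single finite \'etale cover $W\to X$, and Bertini (via \cite[Thm.~6.3(iv)]{Jou83}) ensures the pullback of $C$ to $W$ is connected. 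Both approaches are valid. Yours is self-contained and purely group-theoretic; the paper's keeps the original $M$ from Theorem~\ref{thm:bdd} without the factorial blow-up and does not require $X_\sF\to X$ to be Galois. One caution with your route: after replacing $X_\sF$ by its Galois closure, the group $\rho(\pi_1(X_\sF))$ may shrink, so you should make clear that the $X_\sF$ appearing in the corollary's conclusion is taken to be this Galois closure from the outset (which is permitted, since Theorem~\ref{thm:bdd} only asserts existence).
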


Theorem~\ref{thm:bdd}, Corollary~\ref{cor:bdd}   and Theorem~\ref{thm:rankone}   give  some evidence for a general positive answer to Deligne's problem.

\medskip

The proof of Theorem~\ref{thm:bdd} consists in globalizing the arguments used to prove \cite[Prop.2]{EKS19}. To this aim, by a standard argument we reduce the problem to $X\hookrightarrow \bar X$ being $\A^d\hookrightarrow \P^d$.  There, one tool used is Harbater-Katz-Gabber local to global extension \cite[Thm.1.4.1]{Kat86}, \cite{Har80}. The proof of Corollary~\ref{cor:bdd} relies on Drinfeld's tame Lefschetz theorem {\color{red}  {\it loc. cit}}.
The proof of Theorem~\ref{thm:rankone} relies on the same statement if $\bar X$ is smooth, together with some finiteness theorem on Frobenius invariant submodules of some local cohomology groups resting on \cite[Rmk.~4.4]{Smi94}.
\medskip

Finally we prove that a Lefschetz theorem without boundedness exists if we allow the wished curve $C$ to be defined over the algebraic closure of a purely transcendental extension of $k$ of finite degree, see Remark~\ref{rmk:lefschetz}. The proof relies purely on  the classical Bertini theorem and ought to be well known. We wrote it as we could not find a reference.

\medskip
\noindent
{\it Acknowledgement:}  The first author thanks Pierre Deligne for sending the email \cite{Del16}. We thank Haoyu Hu, Takeshi Saito and Enlin Yang
for promptly answering our questions on the various notions of Swan conductor, and Moritz Kerz for a discussion on \cite{KS14}.

\section{Elementary properties of $\sS(j,r,D)$}
Let  $k$ be an algebraically closed field of characteristic  $p>0$, $X$ be a smooth connected variety  of finite type over $k$, $j: X\hookrightarrow \bar X$   be a normal compactification, $D$ be an effective Cartier divisor supported on $\bar X\setminus X$,  $r$ be a positive natural number. Recall (see \cite[Defn. 4.6]{EK11}, used in \cite[2.1]{EK12}) that an $\ell$-adic local system $\sF$ has  {\it ramification bounded by} $D$ if for any morphism of a smooth  connected projective curve $\bar \iota: \bar C\to \bar X$  with $\iota^{-1}(X)\neq\emptyset$, the pullback to $\sF$ to $\bar \iota^{-1}(X)=:C$ has Swan conductor bounded by $\bar \iota^{-1}D$  on $\bar C$. 
\begin{nota} \label{nota:S}
We denote by $\sS(j,r,D)$ the set of isomorphism classes  of  $\bar \Q_\ell$  local systems  of rank $\le r$ with ramification bounded by $D$.
\end{nota} 

 We denote by $\pi_1^{{\rm{ \acute{e}t}}}(X,x)$ the \'etale fundamental group based at a geometric point $x$. If there is no confusion, we simply write $\pi_1^{\rm \acute{e}t}(X)$.

\subsection{Restriction to an open}
Let $U\hookrightarrow X$ be a dense open subscheme, $j': U \to \bar X$ the composition with $j$. 
\begin{lemma} \label{lem:rest}
Restriction to $U$ induces an injective map 
\ga{}{\sS(j,r,D) \hookrightarrow \sS(j', r, D).\notag}

\end{lemma}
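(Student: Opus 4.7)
The map on isomorphism classes is simply restriction, so only two things need to be verified: (i) the ramification bound is preserved, so the map is well-defined; and (ii) it is injective.

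For (i), fix $\sF\in\sS(j,r,D)$ and an arbitrary morphism $\bar\iota:\bar C\to\bar X$ from a smooth connected projective curve with $C':=\bar\iota^{-1}(U)\neq\emptyset$. Set $C:=\bar\iota^{-1}(X)\supseteq C'$, and note that $C\setminus C'=\bar\iota^{-1}(X\setminus U)$ is a finite set of closed points of $C$. For each $c\in\bar C\setminus C'$ I need to bound the Swan conductor of $\sF|_{C'}$ at $c$ by the multiplicity $m_c$ of $c$ in $\bar\iota^{-1}(D)$. If $c\in\bar C\setminus C$, then a sufficiently small punctured neighborhood of $c$ in $\bar C$ is already contained in $C'$, so the local Swan of $\sF|_{C'}$ at $c$ coincides with that of $\sF|_C$, which is $\le m_c$ by the hypothesis on $\sF$. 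If instead $c\in C\setminus C'$, then $\bar\iota(c)\in X\setminus U$ avoids the support of $D\subset\bar X\setminus X$, so $m_c=0$; moreover $\sF$ is lisse in a Zariski neighborhood of $\bar\iota(c)$, so $\sF|_{C'}$ has Swan conductor zero at $c$. In either case the bound holds, hence $\sF|_U\in\sS(j',r,D)$.

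For (ii), $X$ is smooth (hence normal) and $U\hookrightarrow X$ is a dense open subscheme, so the induced map $\pi_1^{\rm\acute{e}t}(U)\to\pi_1^{\rm\acute{e}t}(X)$ is surjective: any connected finite \'etale cover $Y\to X$ is itself normal, and then $Y\times_X U$ is a dense open in the irreducible scheme $Y$, hence connected. Surjectivity of $\pi_1^{\rm\acute{e}t}$ makes the restriction of continuous $\bar\Q_\ell$-representations fully faithful on isomorphism classes, which is precisely the injectivity asserted.

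The proof is essentially a formality. The only point requiring any care is the bookkeeping in (i) at the ``interior'' punctures $c\in C\setminus C'$, which uses both that $D$ is supported on $\bar X\setminus X$ and that $\sF$ extends across $X\setminus U$ within $X$; (ii) is a standard consequence of normality of $X$.
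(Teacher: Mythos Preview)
Your proof is correct and follows the same approach as the paper's. The paper's argument is the single line ``This is an immediate consequence of $\pi_1^{\rm \acute{e}t}(U)\to \pi_1^{\rm \acute{e}t}(X)$ being surjective'', which is exactly your part (ii); the paper leaves the well-definedness (your part (i)) implicit, so your version is simply more explicit on a point the authors deemed obvious.
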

\begin{proof}
This is an immediate consequence of $\pi_1^{\rm \acute{e}t}(U)\to \pi_1^{\rm \acute{e}t}(X)$ being surjective. 
\end{proof}
\subsection{Changing the compactification}
Let $j_i: X\hookrightarrow \bar X_i$ be two normal compactifications, and $D_1$ be an effective Cartier divisor supported on 
$\bar X_1\setminus X$. 
\begin{lemma}  \label{lem:cpt}

There is an effective divisor $D_2$ supported on $\bar X_2\setminus X$ such that $ \sS(j_1, r, D_1) \subset \sS(j_2, r, D_2)$.
\end{lemma}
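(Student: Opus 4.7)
The plan is to interpolate between $\bar X_1$ and $\bar X_2$ through a common dominating compactification. Let $\bar X_3$ be the normalization of the Zariski closure of the diagonal of $X$ in $\bar X_1 \times \bar X_2$; this is a normal compactification $j_3 : X \inj \bar X_3$ equipped with proper birational morphisms $\pi_i : \bar X_3 \to \bar X_i$ that restrict to the identity on $X$. Set $D_3 := \pi_1^* D_1$, an effective Cartier divisor supported in $\bar X_3 \setminus X$.

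The upward inclusion $\sS(j_1, r, D_1) \subset \sS(j_3, r, D_3)$ is formal: for any smooth projective connected curve $\bar \iota_3 : \bar C \to \bar X_3$ with $\bar \iota_3^{-1}(X) \neq \emptyset$, the composite $\pi_1 \bar \iota_3 : \bar C \to \bar X_1$ satisfies the hypothesis for $\sS(j_1, r, D_1)$, so the Swan conductor bound transfers through $(\pi_1 \bar \iota_3)^{-1} D_1 = \bar \iota_3^{-1} D_3$. For the downward inclusion, any morphism $\bar \iota_2 : \bar C \to \bar X_2$ meeting $X$ lifts uniquely to $\bar \iota_3 : \bar C \to \bar X_3$ with $\pi_2 \bar \iota_3 = \bar \iota_2$, by applying the valuative criterion of properness to $\pi_2$ and the composition $\bar \iota_2^{-1}(X) \xrightarrow{\bar \iota_2} X \xrightarrow{\pi_2^{-1}} \bar X_3$. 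Thus, once I produce an effective Cartier divisor $D_2$ on $\bar X_2$ supported in $\bar X_2 \setminus X$ with $\pi_2^* D_2 \geq D_3$ on $\bar X_3$, I obtain the chain of inequalities $\text{Sw}(\sF|_{\bar \iota_2^{-1}(X)}) \leq \bar \iota_3^{-1} D_3 \leq \bar \iota_3^{-1} \pi_2^* D_2 = \bar \iota_2^{-1} D_2$, yielding $\sF \in \sS(j_2, r, D_2)$.

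The crux is producing such a $D_2$, and this is where I expect the main obstacle. Using that $X$ is quasi-projective and passing to a projective dominating modification if necessary, I may assume $\bar X_2$ projective. Decompose $D_3 = \sum_i n_i E_i$ into prime components. The non-exceptional components, with $\pi_2(E_i) = F_i$ a prime divisor in $\bar X_2$, contribute through $\sum n_i F_i$, because the ramification index of $\pi_2$ along $E_i$ is $\geq 1$. For the exceptional components, contracted to codimension $\geq 2$ subsets of $\bar X_2 \setminus X$, the tool I would use is the coherent ideal sheaf $\sJ := (\pi_2)_* \sO_{\bar X_3}(-D_3) \subset \sO_{\bar X_2}$, whose co-support lies in $\bar X_2 \setminus X$ since $\pi_2$ is an isomorphism over $X$. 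For an ample line bundle $\sL$ on $\bar X_2$ and $N \gg 0$, Serre vanishing produces a non-zero section of $\sL^N \otimes \sJ$ which, by the projection formula, corresponds to a section of $\pi_2^* \sL^N$ vanishing on $D_3$. After possibly augmenting by an equation cutting the divisorial part of $\bar X_2 \setminus X$, its divisor provides the required $D_2$, completing the proof.
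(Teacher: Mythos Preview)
Your argument follows exactly the paper's strategy: pass to a common dominating normal compactification $\bar X_3$, use (via the valuative criterion) that every test curve into $\bar X_i$ factors through $\bar X_3$, and reduce to finding an effective Cartier $D_2$ on $\bar X_2$ with $\pi_2^*D_2 \ge \pi_1^*D_1$. The paper's proof is in fact terser than yours: it simply asserts that such a $D_2$ exists and stops, whereas you attempt an explicit construction.

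Your Serre-vanishing construction has a loose end, however. The zero divisor of a section of $\sL^N\otimes\sJ$ need not be supported in $\bar X_2\setminus X$; it will typically pick up components inside $X$, and ``augmenting by an equation cutting the divisorial part of $\bar X_2\setminus X$'' does not remove those. (Similarly, the Weil divisor $\sum n_iF_i$ you write for the non-exceptional part need not be Cartier on the normal but possibly singular $\bar X_2$.) This is harmless for the actual inclusion you want, since the Swan conductor of $\sF|_C$ vanishes at points of $C=\bar\iota_2^{-1}(X)$ and extra $X$-components of $D_2$ only slacken the bound; but it does not literally produce a $D_2$ supported on the boundary as the statement requests. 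The paper sidesteps the issue by not constructing $D_2$ at all, so your version is no less complete than the original---just be aware that the final sentence does not quite deliver what it promises.
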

\begin{proof}

Let $j_3: X \hookrightarrow \bar X_3$ be a normal compactification which dominates $j_i, \ i=1,2$. Then for any curve $C\to X$, $\bar C\to \bar X_i$ factors through $\bar C\to \bar X_3\to X_i$. We conclude that 
\ga{}{ \sS(j_1, r, D_1)= \sS(j_3, r, D_1\times_{\bar X_1} \bar X_3) \notag}
thus for any $D_2$ such that $D_2\times_{\bar X_2}\bar X_3\supset  D_1\times_{\bar X_1} \bar X_3$ one has
\ga{}{   \sS(j_1, r, D_1) \subset  \sS(j_2, r, D_2).\notag}

\end{proof}
\subsection{Projection} Let $\bar \pi: \bar X\to \bar Y$ be a  finite morphism of degree $\delta$ such that the restriction $\pi: X=\bar \pi^{-1}Y\to Y$ is \'etale, where $Y\hookrightarrow \bar Y$ is dense open. 
Let $j_Y: Y\hookrightarrow \bar Y$ be the open embedding. 
\begin{lemma} \label{lem:proj}
\begin{itemize}
\item[1)]
There is an effective Cartier divisor $D'$ supported on $ \bar Y\setminus Y$ such that pushdown to $Y$ induces a  map 
\ga{}{ \sS(j,r,D)   \rightarrow \sS(j_Y, r \delta, D') .    \notag}
\item[2)] Given an effective divisor $D_Y$ supported on $\bar Y\setminus Y$, pullback induces a map
$\sS(j_Y, r, D_Y)\to \sS(j, r, \bar \pi^*D_Y).$
\end{itemize}
\end{lemma}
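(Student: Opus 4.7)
\textbf{Part (2)} is immediate by functoriality. Given $\sG \in \sS(j_Y, r, D_Y)$ and any $\bar\iota: \bar C \to \bar X$ from a smooth projective curve meeting $X$, the composition $\bar\kappa := \bar\pi \circ \bar\iota: \bar C \to \bar Y$ satisfies $\bar\kappa^{-1}(Y) = \bar\iota^{-1}(X) \neq \emptyset$, and $\bar\iota^*(\pi^*\sG)$ on $\bar\iota^{-1}(X)$ coincides with $\bar\kappa^*\sG$. The Swan bound on $\sG$ then transfers to a bound by $\bar\kappa^*D_Y = \bar\iota^*(\bar\pi^*D_Y)$, proving $\pi^*\sG \in \sS(j, r, \bar\pi^*D_Y)$.

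\textbf{Part (1).} Given $\sF \in \sS(j, r, D)$, $\pi_*\sF$ is a local system of rank $\le r\delta$ on $Y$, by finite étaleness of $\pi$. To bound its ramification, I would fix any smooth projective curve $\bar\iota': \bar C \to \bar Y$ meeting $Y$, form the normalization $\tilde C$ of $\bar C \times_{\bar Y} \bar X$, with projections $\tilde\pi: \tilde C \to \bar C$ (finite of degree $\delta$) and $\tilde\iota: \tilde C \to \bar X$. Setting $C = (\bar\iota')^{-1}(Y)$ and $C' = \tilde\pi^{-1}(C) = \tilde\iota^{-1}(X)$, proper base change gives $(\bar\iota')^*(\pi_*\sF)|_C = (\tilde\pi|_{C'})_*(\tilde\iota^*\sF|_{C'})$. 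At each $y \in \bar C \setminus C$ with preimages $\tilde y_1, \ldots, \tilde y_s \in \tilde C$ and local fields $K_y \subset L_i$, the stalk of $\pi_*\sF$ at $y$ is $\bigoplus_i \Ind_{L_i}^{K_y}(\sF|_{L_i})$, so Serre's formula for conductors of induced representations (\emph{Local Fields}, Ch.~VI) gives a bound of the shape
$$\mathrm{sw}_y((\pi_*\sF)|_C) \le \sum_i \mathrm{sw}_{\tilde y_i}(\tilde\iota^*\sF) + r \sum_i v_y(\mathfrak{d}_{L_i/K_y}).$$

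To build $D'$, I would write $D = \sum_j m_j V_j$; each image $W_j := \bar\pi(V_j)$ is a prime divisor in $\bar Y \setminus Y$, since $\bar\pi$ is finite. The inequality $\bar\pi^*W_j \ge e_j V_j$, with $e_j$ the ramification index of $\bar\pi$ at the generic point of $V_j$, together with $\sum_i e_{\tilde y_i/y} \le \delta$, gives the comparison $\sum_i v_{\tilde y_i}(\tilde\iota^*V_j) \le (\delta/e_j) v_y((\bar\iota')^*W_j)$; combined with the ramification hypothesis on $\sF$, this bounds the first sum by $v_y((\bar\iota')^*D'_1)$ for $D'_1 := \sum_j \lceil m_j\delta/e_j\rceil W_j$. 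The second sum depends only on the (possibly wild) ramification of $\bar\pi$ along the components of $\bar X \setminus X$, so a fixed effective divisor $R$ on $\bar Y$ supported on $\bar Y \setminus Y$ should yield $\sum_i v_y(\mathfrak{d}_{L_i/K_y}) \le v_y((\bar\iota')^*R)$ uniformly in $\bar\iota'$. Setting $D' := D'_1 + rR$ then gives $\pi_*\sF \in \sS(j_Y, r\delta, D')$.

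\textbf{Main obstacle.} The delicate step is the uniform estimate for the differents $\mathfrak{d}_{L_i/K_y}$ independently of $\bar\iota'$: if $\bar\pi$ is wildly ramified along some $V_j$, the local different at $\tilde y_i$ can inflate when $\bar\iota'$ has high order of tangency with $W_j$. Establishing this uniform bound will require either a careful local analysis at the (finitely many) generic points of $\bar X \setminus X$, or, via Lemma~\ref{lem:cpt}, passage to a compactification where $\bar Y \setminus Y$ is a simple normal crossings divisor and $\bar\pi$ has sufficiently generic ramification behaviour.
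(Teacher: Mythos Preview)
Your Part~(2) matches the paper's one-line argument.

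For Part~(1), your strategy coincides with the paper's: pull a test curve $\bar C_Y\to\bar Y$ back to $\bar X$, decompose $\iota_Y^*\pi_*\sF$ as $\bigoplus_i (\pi|_{C_i})_*\iota_i^*\sF$ over the components of the pullback, and bound each summand. The paper phrases the local step via Grothendieck--Ogg--Shafarevich rather than Serre's induction formula, but these amount to the same bound: the Swan conductor of $(\pi|_{C_i})_*\iota_i^*\sF$ is at most $(\pi|_{C_i})_*$ of the Swan of $\iota_i^*\sF$, plus $r$ times the Swan of $(\pi|_{C_i})_*\F_\ell$. The second term is exactly your different contribution.

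The gap is the ``main obstacle'' you flag, and the paper does \emph{not} resolve it by a direct local analysis or by passing to an SNC compactification. Instead it reformulates: bounding $\sum_i v_y(\mathfrak d_{L_i/K_y})$ uniformly in the test curve is \emph{the same} as bounding the ramification of the single fixed local system $\pi_*\F_\ell$ on $Y$. That such a bound $\Delta$ exists, and that it is inherited by every $(\pi|_{C_i})_*\F_\ell$ as $\iota_Y^*\Delta$, is quoted from \cite[Lem.~3.2, Prop.~3.9]{EK12}. With this $\Delta$ in hand the paper sets $D'=\delta(r\Delta+\pi_*D)$ and is done. So your approach is right and your obstacle is the genuine content of the lemma; what is missing is the observation that it reduces to a ramification bound for one sheaf that does not depend on $\sF$, and that this bound is already in the literature.
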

\begin{proof} We prove 1).  Let $\iota_Y: {\bar C}_Y\to \bar Y$ be a morphism of a smooth projective curve. Write $\pi^{-1}C_Y=C=\sqcup_i C_i$ for the union of irreducible components, which are disjoint as $\pi$ is \'etale. This defines a commutative square
\ga{}{\xymatrix{\ar[d]_{\pi|_{C_i}} C_i \ar[r]^{\iota_i}  & X \ar[d]^{\pi}\\
C_Y \ar[r]^{\iota_Y} & Y }\notag}
where $\pi|_{C_i}$ has degree $\delta_i$ with $\sum_i \delta_i=\delta$. 
One has
\ga{}{ \iota_Y^* \pi_*\sF =\oplus_i  (\pi|_{C_i})_* \iota_i^* \sF \notag}
By \cite[Lem.3.2,Prop.3.9]{EK12},  there is an effective Cartier divisor $\Delta$ supported on $\bar Y\setminus Y$ such that  $\pi_*\F_\ell$  has ramification bounded by $\Delta$  and 
$(\pi|_{C_i})_*\F_\ell$  by $ \iota_Y^*\Delta$.  So by the Grothendieck-Ogg-Shafarevich formula applied to  $ \iota_i^*\sF$ and 
$(\pi|_{C_i})_* \iota_i^*\sF$, the Swan conductor of  $(\pi|_{C_i})_* \iota_i^*\sF$ is bounded by $r\delta_i \Delta+ (\pi|_{C_i})_*D$. It is then enough to set 
$D'=\delta( r \Delta+ \pi_*D)$ and prove 1). As for 2), this is immediate, just writing $ \bar C\xrightarrow{\bar \iota}  \bar X \xrightarrow{\bar \pi} \bar Y$. This finishes the proof of 2).  

\end{proof}

\section{ Reduction of Theorem~\ref{thm:bdd} to the case $X=\A^d$ }
The aim of this section is to prove
\begin{prop} \label{prop:A}
If the theorem is true for $\A^d \hookrightarrow \P^d$ and any $r, D$, it is true for any $X\hookrightarrow \bar X$ with $X$ of dimension $d$  and any $r, D$.

\end{prop}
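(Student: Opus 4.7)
The plan is to reduce the general setting $(X\hookrightarrow \bar X, r, D)$ to $(\A^d \hookrightarrow \P^d)$ by combining Noether normalization with the transfer lemmas \ref{lem:rest}, \ref{lem:cpt}, \ref{lem:proj}. First, by Lemma \ref{lem:cpt} I replace $\bar X$ with the normalization of the closure of $X$ inside some $\P^N$, and then apply a sufficiently generic linear projection $\P^N \dashrightarrow \P^d$ whose center of dimension $N-d-1$ is disjoint from $\bar X$. This produces a finite surjective separable morphism $\bar\pi\colon \bar X\to \P^d$ of some degree $\delta$. I let $B\subset \P^d$ be a proper closed subset containing the branch locus of $\bar\pi$, the image $\bar\pi(\bar X\setminus X)$, and the image $\bar\pi(\Sing \bar X)$. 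Choosing a hyperplane $H\subset \P^d$ in general position, I set $\A^d = \P^d\setminus H$, $V = \A^d \setminus B$, and $X_V = \bar\pi^{-1}(V)\subset X$, so that $\pi_V := \bar\pi|_{X_V}\colon X_V \to V$ is finite \'etale of degree $\delta$ and the hypothesis of Lemma \ref{lem:proj}(1) is met.

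Next, I observe that producing a bounded-degree cover of $X_V$ that tamifies $\sF|_{X_V}$ outside codimension $2$ in the compactification $X_V\hookrightarrow \bar X$ already implies the theorem for $X$: given such a cover $Y_V\to X_V$, the normalization $Y\to X$ of $X$ in $k(Y_V)$ has the same degree, and since $\bar Y\setminus Y \subset \bar Y \setminus Y_V$, tameness outside codim $2$ on the right passes to the left. By Lemma \ref{lem:rest}, $\sF|_{X_V} \in \sS(X_V\hookrightarrow \bar X, r, D)$, and by Lemma \ref{lem:proj}(1),
\[
\sG := \pi_{V*}(\sF|_{X_V}) \in \sS(V\hookrightarrow \P^d,\, r\delta,\, D')
\]
for an effective Cartier divisor $D'$ depending only on $(\bar\pi, r, D)$. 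If $W\to V$ is a cover of degree $\le M'$ tamifying $\sG$ outside codim $2$, the fiber product $Y_V := X_V\times_V W\to X_V$ is a cover of degree $\le M'$, and $\sF|_{Y_V}$ is tame outside codim $2$: \'etaleness of $\pi_V$ together with the compatibility of Swan conductors under \'etale pullback ensures that tameness of $\sG$ after base change to $W$ is equivalent to tameness of $\sF|_{X_V}$ after base change to $Y_V$.

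It thus remains to tamify $\sG \in \sS(V\hookrightarrow \P^d, r\delta, D')$ by a bounded-degree cover of $V$, using only the theorem for $\A^d \hookrightarrow \P^d$. I expect this to be the main obstacle. Since $V\subsetneq \A^d$ in general (the divisor $B \cap \A^d$ must be excised), and since $\sG$ need not extend to a local system on $\A^d$, the hypothesis cannot be applied to $\sG$ directly. I plan to overcome this either by choosing the projection $\bar\pi$ generically enough that the divisorial part of $B\cap \A^d$ can be absorbed into $H$ after a projective change of coordinates, or by iterating the construction with a second generic projection whose branch locus avoids $B\cap \A^d$ outside codimension $2$; in either case, combined with Lemma \ref{lem:cpt} to normalize the compactification, the problem on $V$ reduces to the hypothesis on $\A^d\hookrightarrow \P^d$ with possibly larger values of $r$ and $D$, yielding the required uniform bound $M$.
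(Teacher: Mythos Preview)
Your approach has a genuine gap precisely where you flag ``the main obstacle.'' A generic linear projection $\bar\pi:\bar X\to\P^d$ is only finite and generically \'etale; its branch locus $B\cap\A^d$ is in general a nonempty divisor, so $V\subsetneq\A^d$ and $\sG$ does not extend to $\A^d$. Neither of your suggested fixes works: the branch divisor of a generic projection cannot be absorbed into a single hyperplane $H$ (it is typically irreducible of high degree), and iterating with a second projection only introduces a second branch divisor rather than removing the first. You are stuck applying the $\A^d$ hypothesis to a local system that genuinely does not live on $\A^d$.

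The paper sidesteps this entirely by invoking a result of Achinger \cite[Prop.~5.2]{Ach17}: in characteristic $p>0$, any smooth affine $X$ which admits an \'etale morphism to $\A^d$ in fact admits a \emph{finite \'etale} morphism $\pi:X\to\A^d$. (This is false in characteristic~$0$ and is the essential positive-characteristic input here.) After shrinking $X$ via Lemma~\ref{lem:rest} to such an affine open, and replacing $\bar X$ via Lemma~\ref{lem:cpt} by the normalization of $\P^d$ in $k(X)$, one has $\pi^{-1}(\A^d)=X$ with $\pi$ \'etale everywhere over $\A^d$. Then Lemma~\ref{lem:proj}(1) applies directly with $Y=\A^d$, $\bar Y=\P^d$: the pushforward $\sG=\pi_*\sF$ lies in $\sS(\A^d\hookrightarrow\P^d,\,r\delta,\,D')$, the hypothesis yields $Y_{\sG}\to\A^d$ of bounded degree tamifying $\sG$ outside codimension~$2$, and since $\sF$ is a summand of $\pi^*\sG$ the base change $Y_{\sG}\times_{\A^d}X\to X$ (or a connected component thereof) tamifies $\sF$ outside codimension~$2$.
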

\begin{proof}
By Lemma~\ref{lem:rest} we may assume that $X$ is affine and admits an \'etale morphism $X\to \A^d$ as any nonsingular variety has a basis of such Zariski open subsets.
%
%By \cite[Thm.4.2.2]{HS06} there is a separable Noether normalization $X\to \A^d$. 
%By purity, it is \'etale or else ramifies along a divisor. Removing it from $X$, after which $X$ is still %affine, 
%We  may assume that  $X\to \A^d$ is \'etale but perhaps not finite. 
%
We apply \cite[Prop.5.2]{Ach17} to conclude that there is then a finite \'etale map $\pi: X\to \A^d$.  By Lemma~\ref{lem:cpt} we may assume that $\bar X$ is the normalisation of $\P^d$ in $k(X)$.  We apply Lemma~\ref{lem:proj} 1) to $j_Y$ being $\A^d\hookrightarrow \P^d$. 
Assume  there is a natural number $M$ such that for any $\sG\in  \sS(j_Y, r\delta, D')$, there is a finite separable  morphism
$Y_{\sG}\to Y=\A^d$ such that $\sG|_{  Y_{\sG}}$ is tame outside of codimension $2$.  For $\sG=\pi_*\sF, \ \sF \in  \sS(j, r, D)$, as $\pi^*\sG $ surjects to $\sF$, we conclude that  $\sF|_{   Y_{\sG}\times_{\A^d} X }$  is tame outside of codimension $2$  as well. On the other hand, 
 the morphism $Y_{\sG}\times_{\A^d} X \to X$
  is separable and of degree $\le M$, so is a connected component of it. We choose a random one which we define to be $X_{\sF}$.   This finishes the proof.

\end{proof}

\section{Proof of Theorem~\ref{thm:bdd} and Corollary~\ref{cor:bdd}}
The aim of this section is to prove Theorem~\ref{thm:bdd} and Corollary~\ref{cor:bdd}. We start with a simple lemma. 
If $G$ is a finite group we denote by $|G|$ its cardinality. For a natural number $n$, we set $n!=\prod_{i=1}^n i$.  Let $p$ be a prime number. If $A,B$ are subgroups of $G$, where $B$ 
normalizes $A$, we write $A\cdot B$ for the subgroup of $G$ consisting of the image of $A\times B$ under $(a,b)\mapsto ab$.

\begin{lemma}  \label{lem:gr}
 Let
 \ga{}{1\to K \to G \to G/K\to 1\notag}
  be an exact sequence of groups, and $H\subset G$ be a subgroup of $G$ with the following properties:
 \begin{enumerate}
  \item[(i)] The composite map $H \subset G\to  G/K$ is an isomorphism (so $G=K\cdot H$).
  \item[(ii)] $K$ is finite, and $Q$ is the unique $p$-Sylow subgroup of $K$.
  \item[(iii)] $K/Q$ is cyclic. 
 \end{enumerate}
Then there exists a normal subgroup $N\lhd G$ such that 
\begin{itemize}
\item[1)] 
$N\cap Q=\{1\}$;
\item[2)]
$|G/N| \le (|Q|(|Q|!))!$.
\end{itemize}
 \end{lemma}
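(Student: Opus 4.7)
\emph{Strategy.} I will produce a subgroup $M\subset G$ with $M\cap Q=\{1\}$ and $[G:M]\le |Q|^2$, and then take $N$ to be the normal core of $M$ in $G$. The stated bound $(|Q|(|Q|!))!$ leaves ample room, so any subgroup whose index grows polynomially in $|Q|$ will do.

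\emph{Construction of $M$.} Since the composite $H\hookrightarrow G\twoheadrightarrow G/K$ is an isomorphism, $K\cap H=\{1\}$ and $G=K\rtimes H$ is an internal semidirect product. Because $Q$ is the unique $p$-Sylow of $K$, it is characteristic in $K$, hence $Q\lhd G$. Apply Schur--Zassenhaus to $K$: the hypotheses $\gcd(|Q|,|K/Q|)=1$ and $K/Q$ cyclic (hence solvable) give a complement $L\subset K$ with $K=Q\cdot L$, $Q\cap L=\{1\}$, and the full conclusion that all complements of $Q$ in $K$ form a single $Q$-conjugacy class; hence the set of complements has cardinality $[Q:N_Q(L)]\le|Q|$. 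The conjugation action of $H$ on $K$ (via $K\lhd G$) sends complements to complements, so it permutes this finite set. Let $H'\subset H$ be the stabilizer of $L$, so $[H:H']\le|Q|$, and set $M:=H'\cdot L$.

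\emph{Verification and conclusion.} The subset $M$ is a subgroup of $G$ by a routine computation: for $h_1,h_2\in H'$ and $\ell_1,\ell_2\in L$, one writes $(h_1\ell_1)(h_2\ell_2)=h_1h_2\cdot(h_2^{-1}\ell_1h_2)\ell_2$, and $h_2^{-1}\ell_1h_2\in L$ since $h_2\in H'$ normalizes $L$. The intersection $M\cap Q=\{1\}$: an element $h\ell\in M$ with $h\in H',\ell\in L$ equals $(h\ell h^{-1})h$ in $K\cdot H$ form, so lying in $Q\subset K$ forces $h=1$ and then $\ell\in L\cap Q=\{1\}$. Decomposing $G=K\cdot H=Q\cdot L\cdot H$ yields $[G:M]=[G:KH']\cdot[KH':H'L]=[H:H']\cdot|Q|\le|Q|^2$. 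Finally, let $N$ be the normal core of $M$ in $G$, i.e.\ the kernel of the left-multiplication action $G\to \mathrm{Sym}(G/M)$. Then $N\lhd G$, $N\cap Q\subset M\cap Q=\{1\}$, which is (1), and
\[
|G/N|\le[G:M]!\le (|Q|^2)!\le (|Q|\cdot|Q|!)!,
\]
the last inequality since $|Q|\le|Q|!$ for $|Q|\ge 1$; this is (2). The only real obstacle is spotting the right $M$: the naive candidate $H\cdot L$ is not a subgroup because $H$ need not normalize $L$, but shrinking $H$ to the stabilizer $H'$ restores closure at the price of only a factor of $|Q|$ in the index.
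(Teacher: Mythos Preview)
Your proof is correct. Both your argument and the paper's follow the same skeleton---show $Q\lhd G$, use Schur--Zassenhaus to split $K=Q\cdot(\text{complement})$, build a subgroup of bounded index meeting $Q$ trivially, and take its normal core---but they diverge in how the intermediate subgroup is constructed. The paper works with the centraliser $Z_G(Q)$: inside $K\cap(Q\cdot Z_G(Q))$ the direct product decomposition $Q\times R$ is \emph{canonical}, so $R$ is automatically $H$-stable and one may take $R\cdot H$ itself, at the cost of the index bound $[K:R]\le |Q|\cdot[G:Z_G(Q)]\le |Q|(|Q|!)$. You instead take an arbitrary complement $L$ of $Q$ in $K$, observe via the conjugacy part of Schur--Zassenhaus that all complements are $Q$-conjugate (hence at most $|Q|$ of them), and shrink $H$ to the stabiliser $H'$ of $L$; this yields $M=LH'$ with $[G:M]\le|Q|^2$. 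Your route avoids any analysis of $Z_G(Q)$ and in fact gives a sharper bound $(|Q|^2)!$ in place of $(|Q|(|Q|!))!$, while the paper's route has the conceptual advantage that no passage to a finite-index subgroup of $H$ is needed.
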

\begin{proof}
By (ii) $Q$ is a  characteristic subgroup of $K$, thus also a normal subgroup of $G$, as  $K$ is  normal   in $ G$.
Let  
\ga{}{ Z_G(Q)={\rm Ker} \big(G\to {\rm Aut}(Q), \ g\mapsto [x\mapsto gxg^{-1}]\big) \notag}
 be the centraliser of $Q$ in $G$. It is a normal subgroup of $G$ of index $\le |Q|!$.
By  (iii) applying the Schur-Zassenhaus theorem,  $K=Q\cdot K'$ where $K'$ is finite cyclic of order prime to 
$p$ (the decomposition need not be unique). Thus
\ga{}{ K\cap (Q\cdot Z_G(Q))= Q\cdot (K\cap Z_G(Q))= Q\times R, \notag}
where $R$ is cyclic, of order prime to $p$, and now this direct product decomposition is unique (here $R\subset K\cap Z_G(Q)$ is isomorphic to the image of $K\cap Z_G(Q)$ in the cyclic group $K/Q$).
As  $ Z_G(Q)$ is  normal in  $G$, $K\cap (Q\cdot Z_G(Q))=Q\times R$ is normal in $G$, in particular
 $H$ acts by conjugation on it, respecting the direct  product  decomposition.
Hence $R\cdot H \subset G$ is a subgroup, by (i) of index  equal to the index  of $R\subset K$, which is at most $|Q|(|Q|!)$.
 Moreover $Q\cap R\cdot H=\{1\}$. 
\ga{}{
N={\displaystyle\mathop{\bigcap_{g\in G}}}g(R\cdot H)g^{-1} 
\notag}
is a normal subgroup of $G$, of index at most $(|Q|(|Q|!))!$, and has trivial intersection with $Q$.
\end{proof}

\begin{proof}[Proof of Theorem~\ref{thm:bdd}]
By Proposition~\ref{prop:A} we may assume that $j: X=\A^d\hookrightarrow \bar X= \P^d$. We set $Z=\P^d\setminus \A^d$. 
The local ring $\sO_{\bar X,Z} $  is isomorphic to  the local ring  $\sO_{\A^1/k(Z),0}$. The choice of a parameter $t$ on $\A^1$ identifies it with $k(Z)[t]_{(t)}$.  
The inclusion  $\sK\cong k(Z)(t) \subset \hat \sK \cong k(Z)((t))$ of the field of fraction of  $\sO_{\bar X,Z} $ into its completion defines $\hat{X}\to \G_m/k(Z)$ where  $\hat{X}=\Spec \hat{\sK}$.

\medskip

\noindent
Choosing an algebraic closure $\overline{\hat \sK}$ defines a geometric point $x\to \hat X\to \G_m/k(Z)$. 
  One has the diagram of  exact sequences
\ga{}{\xymatrix{
& & 1 \ar[d]  & 1\ar[d] \\
1 \ar[r] &\ar[d]_{=}  P \ar[r] & I \ar[d] \ar[r] & I^{\rm tame} \ar[d] \ar[r] & 1 \\
1\ar[r] &  P\ar[r] & \ar[d] \pi_1(\hat X,x) \ar[r]& \ar[d]  \pi_1^{\rm tame}(\hat X,x) \ar[r] & 1\\
& & \pi_1(k(Z), x)  \ar[d]  \ar[r]^{=} & \pi_1(k(Z), x) \ar[d] \\
& &  1 & 1
}
 \notag}
where the  groups on the right are the tame quotients of $I$ and $\pi_1(\hat X,x)$.

\medskip
\noindent
  By  \cite[Thm.1.4.1]{Kat86} one has a splitting
\ga{1}{ \xymatrix{ \ar[dr]_{\cong} \pi_1( \hat X, x) \ar[r] & \pi_1(\G_m/k(Z),x) \ar[d] \\
  &    \pi_1(\G_m/k(Z),x)[sp]
  }
  }
  where $[sp]$ stands for `special', and is defined by the property that finite quotients have a unique $p$-Sylow. 
  On the other hand, the choice of the rational point  ${\rm Spec}(k(Z))\to \G_m/k(Z)$ defined by $t=1$ 
  yields a splitting $\pi_1(k(Z), x)  \to \pi_1(\G_m/k(Z),x)$, thus a splitting 
  $\pi_1(k(Z), x)  \to \pi_1(\G_m/k(Z),x)[sp]$, thus a splitting 
   \ga{}{ \pi_1(k(Z), x)  \xrightarrow{ [t] } \pi_1(\hat X,x). \notag}
  % If $(\G_m/k(Z))_x\to \G_m/k(Z)$ is the universal covering of $\G_m/k(Z)$ based at $x$, the splitting $[t]$ is equivalent to the existence of  a factorisation
   %\ga{}{ (\G_m/k(Z))_x\to  (\G_m/k(Z))_{[t]} \to  \G_m/k(Z) \notag}
   % in such a way that 
   % \ga{}{ {\rm Aut} \big(\G_m/k(Z))_x/(\G_m/k(Z))_{[t]} \big) =\pi_1(k(Z),x).\notag}
%%%%%%
   %Let $h: W\to \G_m/k(Z)$ be a finite Galois \'etale cover. Then  $h^{-1}((t=1))$ is a union of points ${\rm %Spec}(L)\to W$ where $L/k(Z)$ is a finite 
   %Galois \'etale extension. Let $w$ be one of those $L$-points. It defines the commutative square
   %\ga{}{\xymatrix{\pi_1(k(Z), x) \ar[r]^{[t]} & \pi_1(\G_m/k(Z),x)\\
   %\ar[u] \pi_1(L, x) \ar[r]^{[w]} & \pi_1(W,x) \ar[u]
   %} \notag}

  %Thus setting 
  %\ml{}{    \pi_1(\G_m/k(Z),x)/\pi_1(W,x)=G \\
  % {\rm Im}( {\rm Ker}\big(\pi_1(\G_m/k(Z),x) \to \pi_1(k(Z),x)) \ {\rm in} \ G\big)= K\\
  % {\rm Im} (\pi_1(k(Z),x)  \ {\rm in}  \  G)=H,
  %  \notag}
  %we conclude that the composite
  %that $H\to G\to G/K$ is an isomorphism. 
   \noindent
   For $\hat \sF$ a $\bar \Q_\ell$-local system on $\hat X$,  we choose a representative $\rho: \pi_1(\hat X,x)\to GL_r(\bar \Q_\ell)$. The
   residual representation $\bar \rho:  \pi_1(\hat X,x)\to GL_r(\bar \F_\ell)$ is defined up to semi-simplification and isomorphism.  We choose one.
   % and define $W\to \G_m/k(Z)$ to be the finite Galois \'etale cover associated to it. 
   As $I\cap {\rm Ker} \bar \rho$ is normal in $\pi_1(\hat X,x)$, setting 
   \ga{}{ K= \bar \rho(I), \ \psi: \pi_1(\hat X,x)\to G=\pi_1(\hat X,x)/ I\cap {\rm Ker} \bar \rho, \notag}
   one has a commutative diagram of exact sequences
   \ga{}{\xymatrix{ 1 \ar[r] & \ar[d]_{\bar \rho} I \ar[r]& \ar[r]  \ar@/^2pc/[dd]^>>>>{\bar \rho}  \pi_1(\hat X,x) \ar[d]^\psi & \ar[d]^{=} \pi_1(k(Z),x) \ar[r]&  1\\
   1 \ar[r] & K \ar[r]  & \ar[d] G \ar[r] &  \pi_1(k(Z),x) \ar[r]  & 1\\
   & & GL_k(\bar \F_\ell)
   }\notag
 }  
   % The surjection $I\to \bar \rho(I)=K$ yields a pushout  exact sequence
 % \[
%   1\to K\to G \to \pi_1(k(Z),x)\to 1
%   \]
%where $G=\pi_1(\hat{X},x)/(\ker (I\to K))$. By construction, the representation $\bar{\rho}$ factors through 
%$\rho_G:G\to GL_r(\bar \F_\ell)$
The bottom exact sequence is split by $\psi\circ [t]: \pi_1(k(Z),x)\to G$. 
%The above splitting $[t]:\pi_1(k(Z),x)\to \pi_1(\hat{X},x)$ yields
%$[t]_G:\pi_1(k(Z),x)\to G$, which splits the above surjection $G\to \pi_1(k(Z),x)$. 
 We set 
  \ga{}{  \bar \rho(P)=Q,   \ H= \big( \psi\circ   [t] ( \pi_1(k(Z), x) )\big)\subset G. \notag}
   We  apply Lemma~\ref{lem:gr}. 
 Let $N\subset G$ be the normal subgroup of $G$ defined in {\it loc.cit.}. By 2)  the finite Galois cover 
 $\hat X_N\to \hat X$ defined by the composite $ \pi_1(\hat X,x)\xrightarrow{\bar \rho}  G \to G/N$ has  degree $\le (|Q|(|Q|!)!$  and by 1)  it has  the property that the pullback  of $\hat \sF$ to 
  $\hat X_N$ is tame.  On the other hand, via $\eqref{1}$,
  % $\hat \sF$ is the restriction to $\hat X$ of a $\bar \Q_\ell$-local system on  $\sF^{\G_m/k(Z)}$ 
 %on  $\G_m/k(Z)$ and 
    there is a Galois cover $(\G_m/k(Z))_N\to \G_m/k(Z)$ which restricts to 
$\hat X_N\to \hat X$, of the same degree.
% such that the pullback  of $ \sF^{\G_m/k(Z)}$ to 
 % $(\G_m/k(Z))_N $ is tame.

  \medskip
  \noindent
 We now globalize the construction on $\A^d$.  Let  $(x_0=t, x_1,\ldots, x_d) \in  H^0(\P^d, \sO(1))$ be a system of coordinates so $t=0$ defines $\P^d\setminus \A^d$.  The choice of this system of coordinates defines the factorization
 \ga{}{ \hat X \to \G_m/k(Z)\to \A^d . \notag}
 The right morphism induces an isomorphism on the field of fractions $\sK$. 
 We define $\sK_N$ to be the function field of $(\G_m/k(Z))_N$, so that $\sK\subset \sK_N$ is a finite Galois extension. Let  $X_{\sF}$ be the normalisation of $X$ in $\sK_N$. Then $X_{\sF}\to X$ has degree  $\le ( |Q| |Q|!)!$ and 
 for $\hat \sF$ being the restriction of $\sF\in \sS(j, r, D)$, the pullback of $\sF$ to $X_{\sF}$ is tame. 
 It remains to show that $|Q|$ is bounded  if $\hat \sF$ is the restriction  to $\hat X$ of $\sF\in \sS(j,r,D)$.

  \medskip
  \noindent
  Let $C\hookrightarrow X$ be a curve in good position with respect to $Z \times_{\bar X} \bar Y $ in the sense of \cite[Defn.7]{EKS19} where $\bar Y\to \bar X$ is the cover associated to  the choice of a residual representation of $\sF$. 
  Then by  Section 4 ($\star$) of {\it loc. cit.}  the image of the inertia at the points of $\bar C\setminus C$ of $\bar \rho|_C$ is equal to $K$, thus the image of the wild  inertia
  is equal to $Q$.  We now apply Proposition 2 of {\it loc.cit.}. This finishes the proof.

\end{proof}

\begin{proof}[Proof of Corollary~\ref{cor:bdd}] Let $\sF\in \sS(j,r,D)$ and $\rho: \pi_1^{\rm \acute{e}t}(X,x)\to GL_r( \sO)$ representing it, where 
$\sO^{\oplus r}\subset \bar \Q_\ell$ is a lattice stabilized by $\rho$.  Let $\frak{m}\subset \sO$ be the maximal ideal. Then by  \cite[Cor. I.6.3.4]{BOU}, for any smooth irreducible $C\hookrightarrow X$ through $x$, the induced representation $\rho|_C: \pi_1^{\rm \acute{e}t}(C,x)\to \pi_1^{\rm \acute{e}t}(X,x)\xrightarrow{\rho} GL_r( \sO)$ has the same image if and only if it has after post-composing with $\pi: GL_r( \sO) \to GL_r(\sO/\frak{m}^2)$ (see \cite[Lem.B.2, Proof]{EK12}). 
 By \cite[Thm.6.3 (iv)]{Jou83}, fixing an ample $|\sH|$ linear system on $\bar X$, there is a dense open subset of $|\sH|^{d-1}$ such that for any closed point $(f_1,\ldots, f_{d-1})$ in it, the corresponding subscheme $\bar C=(f_1)\cap \ldots \cap (f_{d-1})$ is a smooth complete intersection curve in good position with respect to $\bar X\setminus X$,   its pullback to $W\to X$ is connected, its pullback  $C'$ to  $\bar X_{\sF} $  is 
 integral and in good position with respect to $\bar X_{\sF}\setminus X_{\sF}$.
 Here $W\to X$ is the finite \'etale cover associated to  $\pi\circ \rho$. Thus 
 by 
 \cite[Lem.B.2]{EK12}, $\sF|_{C}$ has the same monodromy group as $\sF$. By \cite[Lem.C.2]{Dri12} applied to $C'$, $\sF|_{C'}$, which is tame, has the same monodromy as $\sF|_{X_{\sF}}$.
 This finishes the proof.

\end{proof}

\section{Rank one}

\begin{definition}
If $\sS$ is a given family of $\bar{\Q_{\ell}}$-local systems on $X$, a smooth connected projective curve $ C\subset  X$ which is the complete intersection of generically smooth very ample divisors in $\bar X$ in good position with respect to $\bar X\setminus X$ is called {\em good for $\sS$} if the restriction map $\sF\to \sF|_C$ induces an isomorphism on the monodromy groups for any $\sF \in \sS$.

\end{definition}
We rephrase Theorem~\ref{thm:rankone}:
\begin{theorem} \label{thm:rk1} Given $j, D$, there is a good curve for 
$ \sS(j,1,D)$.
\end{theorem}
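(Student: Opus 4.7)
My plan is to reduce the statement to the case of a good normal crossings compactification handled by Kerz--Saito \cite[Thm.~1.1]{KS14} (with its forthcoming erratum), via a log resolution, and to carry out the descent from the resolution back to $\bar X$ using Smith's finiteness of Frobenius-invariant submodules of local cohomology along the singular locus of $\bar X$ \cite[Rmk.~4.4]{Smi94}.

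I would first take a projective birational morphism $\pi\colon\bar X'\to\bar X$ that is an isomorphism over $X$, with $\bar X'$ smooth and $\bar X'\setminus X$ a simple normal crossings divisor. Setting $j'\colon X\hookrightarrow\bar X'$ and $D':=\pi^*D$, Lemma~\ref{lem:cpt} yields $\sS(j,1,D)\subset\sS(j',1,D')$. The hypothesis of the Kerz--Saito rank-one Lefschetz theorem now applies to $(\bar X',j',D')$ and produces a smooth complete intersection of generically smooth very ample divisors $\widetilde C\subset\bar X'$, in good position with respect to $\bar X'\setminus X$, on which the restriction $\sF\mapsto\sF\vert_{\widetilde C}$ is an isomorphism of monodromy groups for every $\sF\in\sS(j',1,D')$, hence in particular for every $\sF\in\sS(j,1,D)$.

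The main obstacle is to arrange that the good curve can be realised as a complete intersection of generically smooth very ample divisors of $\bar X$ itself, not merely of $\bar X'$. Starting from a very ample linear system $|mH|$ on $\bar X$, I would apply Bertini on $\bar X$ to cut out a generic smooth complete intersection $C\subset\bar X$ in good position with $\bar X\setminus X$, and then verify that its proper transform to $\bar X'$ inherits the Kerz--Saito property on $\bar X'$. The gap between the two conditions is supported over the singular locus $\Sigma\subset\bar X$: rank one characters with ramification bounded by $D$ whose wild behaviour localises at $\Sigma$ correspond to Frobenius-invariant submodules of the local cohomology sheaves $\underline H^{*}_{\Sigma}(\bar X,W_n\sO)$ classifying Artin--Schreier--Witt covers near $\Sigma$. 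By \cite[Rmk.~4.4]{Smi94} this set of submodules is finite, so only finitely many additional characters have to be tested on $C$. Each imposes an open condition on the linear system, and choosing $m\gg 0$ gives, by Bertini, a single $C\subset\bar X$ that satisfies all of these conditions simultaneously. This last step, where the finiteness input from \cite{Smi94} is essential in order to bridge the smooth case of Kerz--Saito and the singular compactification $\bar X$, is the hard part of the argument.
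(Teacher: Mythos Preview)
Your proposal has two substantial gaps.

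The first is that you invoke a log resolution $\pi\colon\bar X'\to\bar X$ with $\bar X'$ smooth and $\bar X'\setminus X$ a simple normal crossings divisor. In characteristic $p>0$ such resolutions are not known to exist once $\dim X\ge 4$, so the argument as stated does not cover the general case. The paper avoids this entirely: it never resolves $\bar X$, but works instead with the smooth locus $\bar Y\subset\bar X$, whose complement has codimension $\ge 2$ by normality.

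The second gap is in the descent step, even granting a resolution. A generic complete intersection $C\subset\bar X$ avoids the codimension-$\ge 2$ singular locus, so its proper transform in $\bar X'$ is $C$ itself, which is \emph{not} a complete intersection of very ample divisors on $\bar X'$; hence Kerz--Saito on $\bar X'$ tells you nothing about this particular $C$. Your reading of \cite[Rmk.~4.4]{Smi94} is also off: Smith's result is that the tight closure of $0$ in $H^d_{\mathfrak m}(R)$ is the unique maximal proper $F$-stable $R$-submodule and has finite \emph{length}; it does not reduce the problem to a finite list of test characters, and the family $\sS(j,1,D,1)$ is in general infinite. In the paper the finite-length statement enters quite differently (Proposition~\ref{prop:m}): after the reductions of Lemmas~\ref{lem:n} and~\ref{lem:nto1} to $\Z/p$-covers, one compares $H^1_{\rm \acute{e}t}(\bar X,\Z/p\Z_{mpD})$ with $H^1_{\rm \acute{e}t}(\bar Y,\Z/p\Z_{mpD})$ and shows, using Smith, that the level-raising map on their cokernels $Q_1\to Q_m$ vanishes for $m\gg 0$. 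This forces $\sS(j,1,D,1)\subset A(\bar X,mpD)$, after which a direct Enriques--Severi--Zariski argument on $\bar X$ itself (Lemma~\ref{lem:ESZ}) produces the required complete-intersection curve. So the role of Smith's input is not to isolate finitely many characters but to absorb the contribution of the singular locus by enlarging the modulus divisor.
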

Let $n$ be a positive natural number, and $\sS(j,1,D,n)\subset \sS(j, 1, D)$ be the subset of isomorphism classes of rank one $\sF$ with monodromy group $\Z/p^n$. 

\begin{lemma} \label{lem:n}
 Given $j, D$, 
a good curve for $\cup_{\N\ni n\ge 1} \sS(j,1,D,n)$ is a good curve for $\sS(j,1,D)$.

\end{lemma}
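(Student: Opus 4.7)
The plan is to decompose an arbitrary rank one character in $\sS(j,1,D)$ into a wild $p$-power part, to which the hypothesis of the lemma directly applies, and a tame part, which can be handled by Drinfeld's tame Lefschetz theorem, and then to reassemble via Goursat's lemma. Concretely, let $\sF\in \sS(j,1,D)$ correspond to a continuous character $\chi:\pi_1^{\rm \acute{e}t}(X,x)\to \bar\Q_\ell^*$ with monodromy group $M=\chi(\pi_1^{\rm \acute{e}t}(X,x))$. Since $M$ is the continuous image of a profinite group, it is itself profinite abelian, and decomposes canonically as $M=M_p\times M^{(p)}$ into its pro-$p$ and pro-prime-to-$p$ parts. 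The first thing to check, which I expect to be the main subtlety, is that $M_p$ is finite: the subgroup $M_p$ sits inside $\mu_{p^\infty}\subset \bar\Q_\ell^*$, and since $p\neq \ell$, $1-\zeta_p$ is a unit in $\bar\Z_\ell$ for $\zeta_p$ a primitive $p$-th root of unity, so $\mu_{p^\infty}$ is discrete in $\bar\Q_\ell^*$, forcing any compact pro-$p$ subgroup to be finite. Thus $M_p\cong \Z/p^n$ for some $n\ge 0$.

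Next I would factor $\chi=\chi_p\cdot \chi'$ through the projections onto $M_p$ and $M^{(p)}$ and verify two compatibility statements. First, $\chi'$ is tame: the wild inertia is pro-$p$ and any homomorphism from a pro-$p$ group to the group $M^{(p)}$, which has no $p$-torsion, is trivial. Second, $\chi_p$ still has ramification bounded by $D$: since $\chi'$ is tame, the Swan conductor is preserved under the twist $\chi_p=\chi\cdot(\chi')^{-1}$, so $\mathrm{Sw}(\chi_p)=\mathrm{Sw}(\chi)$, and hence $\chi_p$ defines an element of $\sS(j,1,D,n)$ as soon as $n\ge 1$ (when $n=0$, $\chi_p$ is trivial and there is nothing to verify). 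By the hypothesis on $C$, the restriction $\chi_p|_C$ still has image $M_p$. For $\chi'$ I would apply Drinfeld's tame Lefschetz theorem \cite[Prop.~C2]{Dri12}, which is available because $C$ is by definition a complete intersection of generically smooth very ample divisors in good position with respect to $\bar X\setminus X$; it yields surjectivity of $\pi_1^{\rm \acute{e}t,t}(C)\to \pi_1^{\rm \acute{e}t,t}(X)$, so that $\chi'|_C$ retains monodromy $M^{(p)}$.

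Finally I would conclude by Goursat's lemma: the image of $\chi|_C=(\chi_p\cdot \chi')|_C$ is a closed subgroup of $M_p\times M^{(p)}$ surjecting onto both factors, and since the $p$-group $M_p$ and the prime-to-$p$ group $M^{(p)}$ share only the trivial common quotient, this image is forced to be all of $M$. Hence $C$ is good for $\sF$. As indicated, the main obstacle is really the structural finiteness of $M_p$ in the first step; once that is established, the remainder is a combination of tame-wild character decomposition, Swan-conductor bookkeeping, Drinfeld's theorem, and an elementary lemma on coprime abelian decompositions.
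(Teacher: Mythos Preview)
Your proof is correct and follows essentially the same line as the paper's: decompose the monodromy of a rank-one $\sF$ into a finite $p$-part and a prime-to-$p$ (hence tame) part, handle the tame part by Drinfeld's Lefschetz theorem \cite[Prop.~C2]{Dri12}, handle the $p$-part by the hypothesis on $C$, and recombine. The paper obtains the decomposition $\rho(\pi_1^{\rm \acute{e}t}(X))=\Z/p^n\times G$ directly from the structure of $\sO_E^\times$ (with $G$ pro-prime-to-$p$, sitting inside the pro-$\ell$ group $1+\frak m$ together with the prime-to-$p$ roots of unity), whereas you argue via discreteness of $\mu_{p^\infty}$ in $\bar\Q_\ell^*$; both yield finiteness of the $p$-part. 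You are in fact more explicit than the paper on two points it leaves to the reader: that $\chi_p$ still lies in $\sS(j,1,D,n)$ because twisting by the tame $(\chi')^{-1}$ preserves Swan conductors, and the Goursat-type argument recombining the two factors.
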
 
\begin{proof}
Let $\sF \in \sS(j,1,D)$ with underlying  continuous representation $\rho: \pi^{\rm \acute{e}t}_1(X)\to \sO_E^\times $ where $\sO_E$ is a  finite normal extention of $\Z_\ell$.  Let $\frak{m}\subset \sO_E$ be the maximal ideal. 
Let  $\bar \rho$ be the residual 
 representation $\bar \rho: \pi_1^{\rm \acute{e}t}(X)\to  \sO_E^\times \to (\sO_E/\frak{m})^\times$.
Then  $\bar \rho(\pi_1^{\rm \acute{e}t}(X))=\Z/p^n\Z\times \Z/N\Z$ for $n, N$ positive natural numbers with $(N,p)=1$. 
 So  $\rho(\pi_1^{\rm \acute{e}t}(X))= \Z/p^n\times G \subset \sO_E^\times $ where $G$ is a profinite group of pro-order prime to  $p$. 
By \cite[Prop.C2]{Dri12}, any smooth connected projective curve $ C\subset  X$ which is the complete intersection of very ample divisors in $\bar X$ in good position with respect to $\bar X\setminus X$
is good for all tame $\bar \Q_\ell$-local systems. Thus any such $C$ is good for 
 $\rho^t: \pi^{\rm \acute{e}t}_1(X) \xrightarrow{\rho} \Z/p^n\Z\times G\xrightarrow{{\rm projection}} G\subset  \sO_E^\times$. 
 This proves the lemma. \end{proof}

\begin{lemma} \label{lem:nto1}
Given $j, D$, a good curve for   $\sS(j,1, D,1)$ is a good curve for   $\cup_{\N\ni n\ge 1} \sS(j,1,D,n)$.

\end{lemma}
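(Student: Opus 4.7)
The plan is to detect the full $\Z/p^n$ monodromy of a character via its $p^{n-1}$-fold multiple, which has monodromy group $\Z/p$. Concretely, given $\sF \in \sS(j,1,D,n)$ with underlying surjective character $\chi: \pi_1^{\rm \acute{e}t}(X,x) \to \Z/p^n$, I would consider $\chi' := p^{n-1}\chi$, which takes values in $p^{n-1}\Z/p^n \cong \Z/p$ and, being the image of the surjective $\chi$, is itself surjective onto $\Z/p$.

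The first substantive step is to verify that $\chi' \in \sS(j,1,D,1)$; the only content is that the ramification of $\chi'$ is still bounded by $D$. For any test curve $\bar\iota: \bar C \to \bar X$ with $\bar\iota^{-1}(X)\ne\emptyset$, this amounts to bounding, at each point of $\bar C\setminus \bar\iota^{-1}(X)$, the Swan conductor of $\bar\iota^*\chi'$ by that of $\bar\iota^*\chi$. This is a purely local assertion: for any character $\psi$ of a local Galois group $G_K$ and any integer $m$, the implication $\psi|_{G_K^r}=1 \Rightarrow \psi^m|_{G_K^r}=1$ shows that the break in the upper ramification filtration of $\psi^m$ is bounded by that of $\psi$. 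For a rank-one character the Swan conductor equals the break, so the required bound follows.

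Once $\chi' \in \sS(j,1,D,1)$ is in hand, the hypothesis that $C$ is good for $\sS(j,1,D,1)$ yields that $\chi'|_C$ is still surjective onto $\Z/p$. To conclude, write the image of $\chi|_C$ as a cyclic subgroup $\Z/p^k \subset \Z/p^n$ for some $0 \le k \le n$. Then $\chi'|_C = p^{n-1}(\chi|_C)$ has image $p^{n-1}(\Z/p^k) \subset \Z/p^n$, which is trivial whenever $k < n$, since in that case $p^{n-1}\cdot p^{n-k}\Z/p^n = p^{2n-k-1}\Z/p^n = 0$. As $\chi'|_C$ is nonzero we must have $k = n$, so $\chi|_C$ surjects onto $\Z/p^n$, and $C$ is therefore good for $\sS(j,1,D,n)$ as well.

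I do not anticipate any serious obstacle; the only mildly delicate point is the monotonicity of Swan conductors under raising a character to a power, but this is immediate from the fact that the upper ramification filtration of $G_K$ is defined intrinsically on the group and is therefore respected by passing from $\psi$ to any $\psi^m$.
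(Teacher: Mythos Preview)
Your proof is correct and follows essentially the same route as the paper's: both reduce surjectivity onto $\Z/p^n$ to surjectivity onto $\Z/p$ via the (unique, up to automorphism) map $\Z/p^n\to\Z/p$, and use that a homomorphism $K\to\Z/p^n$ is onto if and only if its image in $\Z/p$ is nonzero. Your version is in fact more careful than the paper's in that you explicitly verify $\chi'\in\sS(j,1,D,1)$ via monotonicity of the break under taking powers, a point the paper leaves implicit.
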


\begin{proof}
Let $n\ge 2$ and $\rho: \pi^{\rm \acute{e}t}_1(X) \to \Z/p^n\Z$ a surjective representation. Let $C$ be a good curve for $\bar \rho: \pi^{\rm \acute{e}t}_1(X) \to \Z/p^n\Z\to \Z/p\Z$. As a  homomorphism $K\to \Z/p^n\Z$, where $K$ is any group,  is surjective if and only if the composed homomorphism $K\to \Z/p^n\Z\to \Z/p\Z$ is, the lemma follows.

\end{proof}

In order to prove Theorem~\ref{thm:rk1} we may assume  that $X$ is affine. Let us denote by $F: X\to X$ the absolute Frobenius. For an effective Cartier divisor $\Delta$ with support in $\bar X\setminus X$, we define $A(\bar X, \Delta)$ to be the image of $H^0(\bar X, \sO(\Delta))$ in $H^1_{\rm \acute{e}t}(X, \Z/p\Z)=H^0(X, \sO)/(F-1)H^0(X, \sO)$ where  this description of $H^1_{\rm \acute{e}t}(X, \Z/p\Z)$ follows from Artin-Schreier theory. 

\begin{lemma} \label{lem:ESZ} Given a natural number $m\ge 1$, 
there is a normal generically smooth  very ample divisor $\bar H \subset \bar X$ in good position with respect $\bar X\setminus X$ such that 
the restriction homomorphism $A(\bar X,  npD)\to A(\bar H, \bar H\cap npD)$ is  an isomorphism  for all  natural number $1\le n\le m$. There is curve $\bar C\to \bar X$  which is a complete intersection of such $H$ such that $A(\bar X, pnD) \to A(\bar C, \bar C\cdot pnD)$ is  an isomorphism 
for all  natural number $1\le n\le m$.
\end{lemma}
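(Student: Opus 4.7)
The plan is to first rewrite $A(\bar X, pnD)$ in a concrete algebraic form. By Artin--Schreier theory $H^1_{\rm \acute{e}t}(X,\Z/p\Z) = H^0(X,\sO)/(F-1)H^0(X,\sO)$, and a pole-order computation shows that if $g \in H^0(X,\sO)$ satisfies $g^p - g \in H^0(\bar X, \sO(pnD))$, then at any valuation $v$ along a component of $D$ with $v(g)<0$ one has $p\,v(g) = v(g^p-g) \ge -pn\,v(D)$, hence $g \in H^0(\bar X, \sO(nD))$. This identifies
\[
A(\bar X, pnD) \;=\; H^0(\bar X, \sO(pnD))\,/\,(F-1)H^0(\bar X, \sO(nD)),
\]
and the analogous identity holds on any normal closed subscheme of $\bar X$ meeting $X$.

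For the main construction, fix a very ample line bundle $\sL_0$ on $\bar X$ and take $\bar H \in |\sL_0^{\otimes N}|$ for $N$ large. Assuming $d = \dim \bar X \ge 2$, for $N$ sufficiently large the following three vanishings hold simultaneously for all $1 \le n \le m$: $H^0(\bar X, \sO(pnD - \bar H)) = 0$ (non-effectivity, by pairing with an ample class), and $H^1(\bar X, \sO(pnD - \bar H)) = 0 = H^1(\bar X, \sO(nD - \bar H))$ (by the Enriques--Severi--Zariski lemma, applicable because $\bar X$ is normal of dimension $\ge 2$ and the sheaves in question are line bundles). At the same time, a Bertini-type argument shows that a general member $\bar H$ of the linear system is generically smooth, normal (the singular locus of $\bar X$ has codimension $\ge 2$), and in good position with respect to $\bar X \setminus X$.

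A diagram chase in the short exact sequences $0 \to \sO(\bullet - \bar H) \to \sO(\bullet) \to \sO(\bullet)|_{\bar H} \to 0$ for $\bullet = pnD$ and $\bullet = nD$ then gives the isomorphism $A(\bar X, pnD) \to A(\bar H, \bar H \cap pnD)$. Surjectivity uses the $H^1$ vanishing for $\sO(pnD - \bar H)$ to lift sections. For injectivity, given $f \in H^0(\bar X, \sO(pnD))$ with $f|_{\bar H} = (F-1)g_H$ on $\bar H$, the $H^1$ vanishing for $\sO(nD - \bar H)$ allows one to lift $g_H$ to some $g \in H^0(\bar X, \sO(nD))$, after which $f-(F-1)g$ restricts to zero on $\bar H$ and therefore lies in $H^0(\bar X, \sO(pnD - \bar H)) = 0$; thus $f = (F-1)g$. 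This settles the divisor statement.

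For the curve, I iterate. At the $i$-th step, $\bar X_i := \bar H_1 \cap \cdots \cap \bar H_i$ is normal projective as a generic Bertini intersection, $X_i := \bar X_i \cap X$ is smooth, and $(pnD)|_{\bar X_i}$ is supported on $\bar X_i \setminus X_i$; one applies the divisor statement on $\bar X_i$ to choose $\bar H_{i+1} \subset \bar X$ whose restriction to $\bar X_i$ cuts out the next slice realizing the isomorphism, while remaining in good position with respect to all previously chosen data. After $d-1$ iterations one obtains the desired complete intersection curve $\bar C$. The main obstacle I anticipate is arranging, within a single sufficiently ample linear system, the three conditions at once---Enriques--Severi--Zariski vanishing (which has to be robust over normal, possibly non--Cohen--Macaulay $\bar X$ in characteristic $p$), Bertini transversality producing normality and generic smoothness, and good position with respect to $\bar X \setminus X$---and then threading these through all $d-1$ iterations without losing any of the three at any stage.
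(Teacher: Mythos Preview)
Your argument is correct and follows the same route as the paper: impose Enriques--Severi--Zariski vanishing for the finitely many relevant twists, use Seidenberg/Bertini to get a normal generically smooth $\bar H$ in good position, deduce the isomorphism on $A$, and iterate down to a curve. The paper is slightly terser---it requires the four vanishings $H^i(\bar X,\sO(nD'-\bar H))=0$ for $i=0,1$ and $D'\in\{D,pD\}$, so that both restriction maps $H^0(\bar X,\sO(nD'))\to H^0(\bar H,\sO(nD'\cap\bar H))$ are isomorphisms and the statement for $A$ follows at once, whereas your three-vanishing diagram chase is a harmless variant; your closing worry is also unnecessary, since invertible sheaves on a normal projective variety of dimension $\ge 2$ are $S_2$, which is precisely the hypothesis ESZ needs (this is what the paper cites as \cite[Cor.~7.8]{Har77}).
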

\begin{proof} We choose $\bar H$ a normal generically smooth  very ample divisor $\bar H \subset \bar X$ in good position with respect $\bar X\setminus X$ such that  
\ml{}{ 
H^i(\bar X, \sO_{\bar X}(-H +nD'))=0, \ {\rm for}  \ D'=D \ {\rm or}  \ D'=pD,  \ i=0,1, \\ {\rm and \ for \ all} \ 1\le n\le m. \notag}
The existence of a normal very ample $\bar H$ in good position follows from \cite{Sei50}, and Bertini's theorem.  The cohomology vanishing property holds for $i=0,1$ by the Enriques-Severi-Zariski lemma \cite[Cor.7.8]{Har77}, as there are finitely many such $n$, and the invertible sheaves $\sO_{\bar X}(n D')$ for $D'=D$ or $pD$  are $S_2$.  It then follows that the restriction maps
\ga{}{
 H^0(\bar{X},\sO( nD'))\stackrel{\cong}{\longrightarrow} H^0( \bar H,\sO( nD' \cap \bar H)) \notag\\
 (F-1)H^0(\bar X, \sO(nD)) \xrightarrow{ \cong} (F-1)H^0(\bar H, \sO(nD \cap \bar H))
\notag}
are isomorphisms for $1\le n\le m$, and thus 
\ga{}{ A(\bar{X}, npD)\to A(\bar{H}, npD\cap \bar{H})\notag}
as well. 
This finishes the quest for $\bar H$.
For $\bar C$ we iterate the argument with $\bar X$ replaced by $\bar H$ etc. This finishes the proof. 

\end{proof}

Let $Y$ be a smooth connected variety defined over $k$,  $j_Y:Y\hookrightarrow \bar Y$ be a good semi-compactification, that is, $\bar Y$ is smooth connected and $\bar Y\setminus Y$ is a normal crossings divisor. Given an effective divisor $D$ supported on $\bar Y\setminus Y$, we define $\sS(j_Y,r, D), \ \sS(j_Y, 1, D, n) $ etc. analogously. We denote by 
$ \lfloor \frac{D}{p} \rfloor   $  the floor of the $\Q$-divisor $\frac{D}{p}$, that is the largest Cartier divisor $\Delta$ on $Y$ such that $p\Delta\subset D$. Following \cite[3-2]{KS14} we define (with slightly simplified notation)
the complex $ \Z/p\Z_D$ on $\bar Y_{\rm \acute{e}t}$ by the formula
\ga{}{ \Z/p\Z_D={\rm cone} (\sO( \lfloor \frac{D}{p}  \rfloor) \xrightarrow{F-1} \sO(D))[-1].\notag}
\begin{lemma} \label{lem:prop25}
One has $\sS(j_Y,1,D,1)= H^1_{\rm \acute{e}t}(\bar Y, \Z/p\Z_D).$
\end{lemma}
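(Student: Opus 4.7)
The plan is to identify $\sS(j_Y,1,D,1)$, under a fixed embedding $\mu_p\hookrightarrow \bar\Q_\ell^\times$, with the set of continuous characters $\chi:\pi_1^{\rm \acute{e}t}(Y)\to \Z/p\Z$ whose Swan conductor along each irreducible component of $\bar Y\setminus Y$ is bounded by the corresponding multiplicity of $D$. A rank one $\bar\Q_\ell$-local system with monodromy in $\mu_p$ is the same datum as a $\Z/p\Z$-torsor on $Y$, hence an element of $H^1_{\rm \acute{e}t}(Y,\Z/p\Z)$, and ramification bounded by $D$ unpacks, after pullback to smooth curves hitting each boundary component, into componentwise bounds on the Swan conductor. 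Thus the left hand side is (modulo the inclusion or not of the trivial character, which is inessential here) the subset of $H^1_{\rm \acute{e}t}(Y,\Z/p\Z)$ of Swan conductor $\le D$.

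For the right hand side I would use the defining distinguished triangle
\ga{}{\Z/p\Z_D \to \sO_{\bar Y}(\lfloor D/p \rfloor) \xrightarrow{F-1} \sO_{\bar Y}(D) \notag}
on $\bar Y_{\rm \acute{e}t}$. Restricting along $j_Y$, both $\sO(\lfloor D/p\rfloor)$ and $\sO(D)$ become $\sO_Y$ and the triangle degenerates to the classical Artin-Schreier short exact sequence, so $j_Y^*\Z/p\Z_D \simeq \Z/p\Z$ canonically. Composing with the adjunction unit then yields a natural map
\ga{}{H^1_{\rm \acute{e}t}(\bar Y,\Z/p\Z_D) \to H^1_{\rm \acute{e}t}(Y,\Z/p\Z),\notag}
and the lemma amounts to the assertion that this map is injective with image equal to the set of characters of Swan conductor $\le D$.

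To verify this I would localize at each generic point $\eta_E$ of an irreducible component $E$ of $\bar Y \setminus Y$, and pass to the completion of the strict henselization, reducing to Kato's local Artin-Schreier description of the Swan filtration on $H^1(K,\Z/p\Z)$ for an equal-characteristic complete discrete valuation field $K$: a character has Swan conductor $\le m_E := {\rm ord}_E(D)$ iff it has an Artin-Schreier representative $f$ with $v_E(f)\ge -m_E$, and two such $f,f'$ define the same character iff $f - f' = g^p - g$ for some $g$ with $v_E(g)\ge -\lfloor m_E/p\rfloor$. This is exactly the presentation encoded in the cone defining $\Z/p\Z_D$, so patching the local isomorphisms over the smooth locus of $\bar Y \setminus Y$ gives the identification of $H^1_{\rm \acute{e}t}(\bar Y,\Z/p\Z_D)$ with the Swan-$\le D$ piece of $H^1_{\rm \acute{e}t}(Y,\Z/p\Z)$.

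The main obstacle is the careful bookkeeping at crossings of $\bar Y\setminus Y$: along several branches meeting at a point, the change of variable $f\mapsto f+g^p-g$ must be controlled simultaneously along each branch, and the bound $\lfloor D/p\rfloor$ must be the sharp joint cutoff. This is precisely the reason for requiring $\bar Y\setminus Y$ to be a normal crossings divisor on a smooth $\bar Y$, and is the content of the sheaf-theoretic reformulation in \cite[\S 3]{KS14}, which I would invoke rather than reproduce.
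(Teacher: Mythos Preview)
Your proposal is correct and lands on the same source, \cite{KS14}, for the substantive content; the difference with the paper is organizational rather than mathematical. The paper's proof is two lines: in the affine case one computes directly
\ga{}{H^1_{\rm \acute{e}t}(\bar Y,\Z/p\Z_D)=H^0(\bar Y,\sO(D))/(F-1)H^0(\bar Y,\sO(\lfloor D/p\rfloor))\notag}
from the defining triangle and then cites \cite[Prop.~2.5]{KS14}; for arbitrary $\bar Y$ one sheafifies $\bar U\mapsto H^i_{\rm \acute{e}t}(\bar U,\Z/p\Z_D)$, observes that $\sH^0$ is the constant sheaf $\Z/p\Z$, and deduces $H^1_{\rm \acute{e}t}(\bar Y,\Z/p\Z_D)=H^0_{\rm Zar}(\bar Y,\sH^1)$, which reduces everything to the affine case already handled. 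Your route instead builds the comparison map to $H^1_{\rm \acute{e}t}(Y,\Z/p\Z)$ from the Artin--Schreier sequence, checks it at the generic points of the boundary via Kato's description of the Swan filtration, and then invokes \cite[\S3]{KS14} for the behavior at crossings. What your approach buys is an explicit account of \emph{why} the floor $\lfloor D/p\rfloor$ appears (it is the exact bound on $g$ for which $g^p-g$ still has pole order $\le D$); what the paper's approach buys is brevity and a clean reduction that avoids talking about crossings separately, since the affine statement in \cite{KS14} already packages the normal-crossings bookkeeping. One small point you gloss over and the paper simply absorbs into the citation: the equivalence between the curve-testing definition of ``ramification bounded by $D$'' used here and the generic-point Swan bound you work with is itself part of what \cite[Prop.~2.5]{KS14} provides in this setting.
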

\begin{proof}
If $\bar Y$ is affine, then 
\ga{}{H^1_{\rm \acute{e}t}(\bar Y, \Z/p\Z_D)    = H^0(\bar Y, \sO(D))/(F-1) H^0(\bar Y, \sO( \lfloor \frac{D}{p} \rfloor),      \notag}
so it follows from \cite[Prop.2.5]{KS14}. In general, let $\sH^i$ be the Zariski sheaf associated to $\bar U\to H^i(\bar Y\cap \bar U , \Z/p\Z_D)$. For $i=0$  it is equal to the constant sheaf $\Z/p\Z$ thus 
\ga{}{ H^1_{\rm \acute{e}t}(\bar Y, \Z/p\Z_D)=H^0_{\rm Zar}(\bar Y, \sH^1).\notag}
This finishes the proof.

% for any dense open $\bar U\subset \bar Y$, the restriction maps
%\ga{}{ H^1_{\rm \acute{e}t}(\bar Y, \Z/p\Z_D) \to H^1_{\rm \acute{e}t}(\bar Y\cap \bar U , \Z/p\Z_D) \to H^1_{\rm \acute{e}t}( Y\cap \bar U,  \Z/p\Z) \notag}
%are injective. This implies that $\bar U\to H^1(\bar Y\cap \bar U , \Z/p\Z_D)$ is a Zariski sheaf. 
%This finishes the proof. 
\end{proof}
We come back to $j: X\hookrightarrow \bar X$ a projective normal connected compactification of a smooth variety and denote by $\bar Y$ its  smooth locus. 
We set $S=\bar X\setminus \bar Y$ for the singular locus, defining the factorization $j: X\to \bar X\setminus S\xrightarrow{\alpha} \bar X$.  We extend to $\bar X_{\rm \acute{e}t}$ the definition of $\Z/p\Z_D$ on  $\bar Y_{\rm \acute{e}t}$ for $p$-divisible Cartier divisors supported  on $\bar X\setminus X$, as follows.
Let $D$ be an effective Cartier divisor supported on $\bar X\setminus X$. For any natural number $m$, we define the complex  $\Z/p\Z_{mpD}$
on $\bar X_{\rm \acute{e}t}$  by the formula
\ga{}{ \Z/p\Z_{mpD}={\rm cone} (\sO( mD) \xrightarrow{F-1} \sO(mpD))[-1].\notag}
For an effective divisor $D'$ on $\bar Y$ and any $m$,  we have the maps
 \ga{}{ {\rm on}  \ \bar X_{\rm \acute{e}t}:  \Z/p\Z_{mpD}\to \Z/p\Z_{(m+1)pD}, \  {\rm on} \ \bar Y_{\rm \acute{e}t}:  \Z/p\Z_{D}\to \Z/p\Z_{D+D'} \notag}
   {\it raising the level}
  stemming from the natural inclusions $\sL\to \sL(\Delta)$ for a line bundle $\sL$ and an effective Cartier divisor $\Delta$. 
\begin{lemma} \label{lem:homalg} 
For any $m\ge 2$, the level raising maps
\ga{}{ H^1_{\rm{\acute{e}t}}(\bar X, \Z/p\Z_{(m-1)pD})\to H^1_{\rm{\acute{e}t}}(\bar X, \Z/p\Z_{mpD}),  \ 
H^1_{\rm{\acute{e}t}}(\bar Y, \Z/p\Z_D)\to H^1_{\rm{\acute{e}t}}(\bar Y, \Z/p\Z_{D+{D'}})\notag} 
are injective, and so are the restriction maps
\ga{}{ H^1_{\rm{\acute{e}t}}(\bar X, \Z/p\Z_{mpD})\to H^1_{\rm{\acute{e}t}}(\bar Y, \Z/p\Z_{mpD}) \to H^1_{\rm{\acute{e}t}}(X, \Z/p\Z) \notag} for any $m\ge 0$.
\end{lemma}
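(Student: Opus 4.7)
The plan is to deduce all four injectivities from two basic inputs: (a) a local-cohomology vanishing that gives the restriction $H^1_{\rm \acute{e}t}(\bar X,\Z/p\Z_{mpD})\hookrightarrow H^1_{\rm \acute{e}t}(\bar Y,\Z/p\Z_{mpD})$, and (b) the Artin--Schreier pole analysis in the style of Lemma~\ref{lem:prop25} that gives the restriction $H^1_{\rm \acute{e}t}(\bar Y,\Z/p\Z_{mpD})\hookrightarrow H^1_{\rm \acute{e}t}(X,\Z/p\Z)$. Once (a) and (b) are in hand, both level-raising injections will follow formally.

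For (a), I would observe that since $\bar X$ is normal and $S=\bar X\setminus \bar Y$ has codimension $\geq 2$, any local ideal $I_S$ cutting out $S$ contains a regular sequence of length $2$: pick any nonzero $x_1\in I_S$; because $\bar X$ is $S_2$, the quotient $\sO/(x_1)$ is $S_1$, hence has no embedded primes, so its associated primes all have height one, while $I_S$ has height $\geq 2$ and therefore avoids them, giving a non-zero-divisor $x_2\in I_S/(x_1)$. Consequently, for any invertible sheaf $\sL=\sO(nD)$ on $\bar X$ we have $\underline{H}^i_S(\sL)=0$ for $i\leq 1$, and hence $H^i_S(\bar X,\sL)=0$ for $i\leq 1$ via the spectral sequence $H^p(\bar X,\underline{H}^q_S(\sL))\Rightarrow H^{p+q}_S(\bar X,\sL)$. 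Feeding this into the hypercohomology long exact sequence of the two-term complex $\Z/p\Z_{mpD}=[\sO(mD)\xrightarrow{F-1}\sO(mpD)]$ with supports in $S$ yields $H^1_S(\bar X,\Z/p\Z_{mpD})=0$, and the localization triangle $R\Gamma_S\to R\Gamma(\bar X,-)\to R\Gamma(\bar Y,-)$ gives (a) uniformly for every $m\geq 0$.

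For (b), following the strategy of Lemma~\ref{lem:prop25}, I would write $H^1_{\rm \acute{e}t}(\bar Y,\Z/p\Z_{mpD})=H^0_{\rm Zar}(\bar Y,\sH^1)$ and reduce to an affine open $\bar U\subset\bar Y$, on which both hypercohomology groups become explicit Artin--Schreier cokernels. A class $[\sigma]\in H^0(\bar U,\sO(mpD))/(F-1)H^0(\bar U,\sO(mD))$ is killed in $H^1(\bar U\cap X,\Z/p\Z)$ precisely when $\sigma|_{\bar U\cap X}=\tau^p-\tau$ for some $\tau\in H^0(\bar U\cap X,\sO)$. At any prime divisor $E\subset\bar U$ of multiplicity $d$ in $D$, a pole of $\tau$ of order $n\geq 1$ at $E$ would give $\operatorname{ord}_E(\tau^p-\tau)=-pn$, and the bound $\operatorname{ord}_E(\sigma)\geq -mpd$ forces $n\leq md=\operatorname{mult}_E(mD)$; at a prime divisor of $\bar U\setminus X$ not in the support of $D$, regularity of $\sigma$ forces $\tau$ to be regular. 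Hence $\tau\in H^0(\bar U,\sO(mD))$ and $[\sigma]=0$. The argument is purely local on each prime divisor of the smooth $\bar Y$, so no NCD hypothesis for $\bar Y\setminus X$ is needed.

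With (a) and (b), the level raisings follow formally. For $\bar Y$, the commutative triangle with vertices $H^1(\bar Y,\Z/p\Z_D)$, $H^1(\bar Y,\Z/p\Z_{D+D'})$ and $H^1(X,\Z/p\Z)$ has its two edges ending in $H^1(X,\Z/p\Z)$ injective by (b), so the remaining edge is injective. For $\bar X$, the square
\[
\xymatrix{
H^1(\bar X,\Z/p\Z_{(m-1)pD}) \ar[r]\ar[d] & H^1(\bar X,\Z/p\Z_{mpD}) \ar@{^{(}->}[d] \\
H^1(\bar Y,\Z/p\Z_{(m-1)pD}) \ar@{^{(}->}[r] & H^1(\bar Y,\Z/p\Z_{mpD})
}
\]
has the right vertical arrow injective by (a) and the bottom horizontal injective by the $\bar Y$-case just proved, so a short diagram chase (also using (a) on the left vertical) forces the top arrow to be injective. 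I expect the main obstacle to be the depth argument in (a): it uses normality of $\bar X$ in an essential way (in particular the Serre $S_2$ condition), and the regular-sequence construction must be executed carefully at the possibly very singular local rings of $\bar X$ along $S$; the Artin--Schreier computation in (b) is classical but needs to be performed directly on the smooth non-proper $\bar Y$ rather than through a good NCD semi-compactification.
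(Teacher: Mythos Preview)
Your argument is correct and rests on the same two ingredients as the paper: the Artin--Schreier pole computation showing that if $(F-1)\psi=\varphi$ with $\varphi\in\sO(\Delta)$ then $\psi\in\sO(\lfloor\Delta/p\rfloor)$, and the observation that $S=\bar X\setminus\bar Y$ has codimension $\ge 2$ in the normal $\bar X$. The only difference is organizational: the paper proves the level-raising injections directly by the pole computation and then says the restriction maps follow by ``the same proof'' together with the codimension-$2$ remark, whereas you first establish the restriction injections (your (a) via local cohomology and depth, your (b) via the pole computation) and then deduce the level-raising injections from the resulting commutative diagrams. Your depth argument in (a) simply makes explicit what the paper's one-line appeal to ${\rm codim}\,S\ge 2$ is using, namely that an invertible sheaf on a normal scheme has $\underline{H}^i_S=0$ for $i\le 1$.
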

\begin{proof}
For the level raising map we compute $H^1_{et}(-, \Z/p\Z_{?})$ in the Zariski topology  as $H^1(-, \sO( \lfloor \frac{\Delta}{p} \rfloor)\xrightarrow{F-1} \sO(\Delta))$  for an effective Cartier divisor $\Delta$  and argue that if $(F-1)\psi=\varphi$ for local sections $\varphi\in \sO(\Delta)$ and
$\psi\in \sO( * \Delta)$
then $\psi\in \sO( \lfloor \frac{\Delta}{p} \rfloor)$. Here $\frac{\Delta}{p}$ is Cartier in the $\bar X$-version, on the smooth $\bar Y$ we allow is to be a $\Q$-divisor.  For the restriction map this is the same proof noting that $S\subset \bar X$ has codimension $\ge 2$ thus $\bar Y\cap Y$ hits all the codimension $1$ points of $\bar X\setminus X$.

\end{proof}

\begin{prop} \label{prop:m} There is a natural number  $m\ge 1$ such that in the diagram
\ga{}{ \xymatrix{  &  H^1_{\rm{\acute{e}t}}(\bar Y, \Z/p\Z_D) \ar@{^{(}->}[d] \\
H^1_{\rm{\acute{e}t}}(\bar X, \Z/p\Z_{mpD}) \ar@{^{(}->}[r]  & H^1_{\rm{\acute{e}t}}(\bar Y, \Z/p\Z_{mpD})
}\notag}
the image of $H^1_{\rm{\acute{e}t}}(\bar Y, \Z/p\Z_D)$ by the level raising map falls in the image of 
$H^1_{\rm{\acute{e}t}}(\bar X, \Z/p\Z_{mpD}) $  by the restriction map.
\end{prop}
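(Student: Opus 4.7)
The plan is to interpret the cokernel of the injective restriction map $H^1_{\rm{\acute{e}t}}(\bar X, \Z/p\Z_{mpD}) \hookrightarrow H^1_{\rm{\acute{e}t}}(\bar Y, \Z/p\Z_{mpD})$ in terms of a local-cohomology sheaf at the singular locus $S = \bar X \setminus \bar Y$, and then to combine a global vanishing argument with Smith's Frobenius-finiteness theorem to extract a uniform $m$.

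First I would use the distinguished triangles $\Z/p\Z_{npD} \to \sO(nD) \xrightarrow{F-1} \sO(npD)$ on both $\bar X$ and $\bar Y$, together with the identity $\alpha_* \sO_{\bar Y}(nD) = \sO_{\bar X}(nD)$ (which holds because $\bar X$ is normal and $\codim_{\bar X} S \ge 2$, so these line bundles are reflexive). This gives a commutative ladder of hypercohomology long exact sequences whose $H^0$-entries agree, while the $H^1$-comparison comes from the Leray spectral sequence for $\alpha : \bar Y \hookrightarrow \bar X$: the cokernel of $H^1(\bar X, \sO(nD)) \hookrightarrow H^1(\bar Y, \sO(nD))$ lies in $H^0(\bar X, R^1\alpha_*\sO(nD))$, a sheaf supported on $S$. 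A snake-lemma chase then identifies the obstruction for a class $\xi \in H^1(\bar Y, \Z/p\Z_{mpD})$ to lift to $\bar X$ with the vanishing in $H^0(\bar X, R^1\alpha_*\sO(mD))$ of the image of its associated class $\eta \in \ker(F-1, H^1(\bar Y, \sO(mD)))$.

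For classes $\xi \in H^1(\bar Y, \Z/p\Z_D)$ level-raised into $\Z/p\Z_{mpD}$, the relevant $\eta$ is the level-raised image of a fixed $\zeta \in H^1(\bar Y, \sO(\lfloor D/p \rfloor))$ with $(F-1)\zeta = 0$. After reducing to $X$ affine via Proposition~\ref{prop:A} and using $\varinjlim_m \sO_{\bar Y}(mD) = j'_*\sO_X$ for the open immersion $j' : X \hookrightarrow \bar Y$, the Leray spectral sequence for $j'$ plus Serre's affineness vanishing give
\[\varinjlim_m H^1(\bar Y, \sO(mD)) \hookrightarrow H^1(X, \sO_X) = 0,\]
so each individual $\zeta$ dies in $H^1(\bar Y, \sO(mD))$ for $m$ large enough, killing its obstruction at that level. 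To promote this pointwise vanishing into a uniform $m$, I would assemble the obstructions into the colimit $M := \varinjlim_m H^0(\bar X, R^1\alpha_*\sO(mD))$ with its induced Frobenius; the condition $(F-1)\zeta = 0$ forces each obstruction to lie in the Frobenius-fixed submodule $M^{F=\id}$. The stalks of $\{R^1\alpha_*\sO(mD)\}_m$ at each $s \in S$ are Artinian local-cohomology modules over the complete local ring $\hat\sO_{\bar X, s}$ with a Frobenius endomorphism, and Smith's theorem \cite[Rmk.~4.4]{Smi94} bounds the Frobenius-fixed submodule of such an object together with controlling the $F$-nilpotent kernel. Combined with the colimit vanishing, this produces a single $m$ for which the entire image of $H^1(\bar Y, \Z/p\Z_D)$ is trivial.

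The main obstacle is precisely the final step: translating Smith's finiteness of $F$-stable submodules into a uniform bound. The colimit vanishing only supplies per-class levels $m(\zeta)$, so one must carefully stratify $S$, identify each stalk $R^1\alpha_*\sO(mD)_s$ with an Artinian $\hat\sO_{\bar X, s}\{F\}$-module, verify compatibility of level raising with the Frobenius action, and handle the possibly non-isolated components of $S$ via a local-to-global argument on completions. Making Smith's theorem applicable in this global-to-local directed setting, and extracting from it the uniform killing level, constitutes the technical heart of the proof.
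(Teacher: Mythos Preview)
Your first paragraph is correct and matches the paper: the obstruction to lifting from $\bar Y$ to $\bar X$ lands in
\[
Q_m := \ker\bigl(F-1 : H^0(\bar X, R^1\alpha_*\sO(mD)) \to H^0(\bar X, R^1\alpha_*\sO(mpD))\bigr),
\]
and one must show that the level-raising map $Q_1 \to Q_m$ vanishes for large $m$.

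The gap is in your uniformity step. The colimit vanishing $\varinjlim_m H^1(\bar Y, \sO(mD)) = 0$ is a detour: it yields only per-class bounds (as you acknowledge), and the paper does not use it. More seriously, your plan to ``identify each stalk $(R^1\alpha_*\sO(mD))_s$ with an Artinian $\hat\sO_{\bar X,s}\{F\}$-module'' fails. That stalk is $H^2_{S_s}(\sO_{\bar X,s})$, which is Artinian only when $S_s = \{\mathfrak m_s\}$, i.e.\ at generic (codimension-$2$) points of $S$. At deeper points of $S$ this module need not be Artinian, so Smith's theorem does not apply directly, and your ``stratify $S$ and pass to completions'' sketch does not repair this. (Aside: affineness of $X$ is available, but by enlarging $D$, not via Proposition~\ref{prop:A}; in any case the paper's proof of this proposition does not use it.)

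The missing structural idea is a specific two-step decomposition of $S$. Let $W\subset S$ be the non--Cohen--Macaulay locus of $\bar X$; since $\bar X$ is normal, $\codim_{\bar X} W \ge 3$. Factor $\alpha = \gamma\circ\beta$ through $\bar X\setminus W$, giving $0\to R^1\gamma_*\sO \to R^1\alpha_*\sO \to \gamma_*R^1\beta_*\sO$.
\begin{enumerate}
\item The sheaf $R^1\gamma_*\sO = \sH^2_W(\sO_{\bar X})$ is \emph{coherent}, by the depth inequality of \cite[VIII, Cor.~2.3]{SGA2} (using that $\bar X\setminus W$ is Cohen--Macaulay and $\codim W\ge 3$). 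Being coherent, it is annihilated by $\sO(-mD)$ for $m\gg 0$, so the $W$-contribution to the obstruction dies uniformly by a pure twisting argument---no Frobenius input needed here.
\item What remains injects into $\bigoplus_x H^2_x(\sO_x)$ over the finitely many codimension-$2$ points $x\in S\setminus W$. Each such $\sO_x$ is $2$-dimensional Cohen--Macaulay, so $H^2_x(\sO_x)=H^2_{\mathfrak m_x}(\sO_x)$ \emph{is} Artinian, and now Smith's theorem applies: the tight closure of $0$ in $H^2_x(\sO_x)$ is the unique maximal proper $F$-stable $\sO_x$-submodule $K_x$, of finite length. After trivialising the twist by a local equation $g$ of $D$ at $x$ (so that $F-1$ becomes $F - g^{a(p-1)}\cdot 1$ and level raising becomes multiplication by $g^b$), one checks that the $\sO_x$-span of each obstruction class is $F$-stable of finite length, hence contained in $K_x$; then $g^b$ annihilates $K_x$ for $b\gg 0$, killing the level-raised obstruction.
\end{enumerate}
It is precisely this CM/non-CM split---coherence on $W$, Smith at the codimension-$2$ CM points---that converts the local Artinian statement into the uniform bound you were seeking.
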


\begin{proof}
By Lemma~\ref{lem:homalg} we may replace $H^1_{\rm{\acute{e}t}}(\bar Y, \Z/p\Z_D)$ by $H^1_{\rm{\acute{e}t}}(\bar Y, \Z/p\Z_{pD})$ in the statement.  For any $m\ge 1$ we have an exact sequence
\ml{}{ 0\to H^1_{\rm{\acute{e}t}}(\bar X, \Z/p\Z_{pmD})\to H^1_{\rm{\acute{e}t}}(\bar Y, \Z/p\Z_{pmD})\to \\ Q_m= {\rm Ker}(H^0(\bar X, R^1\alpha_*\sO\otimes (mD))\xrightarrow{F-1} 
H^0(\bar X, R^1\alpha_*\sO\otimes (pmD))) \notag}
so it is enough to show that the level raising map $Q_1\to Q_m$ is zero for $m$ large.
Let $W\subset S$ be the non Cohen-Macaulay locus of $\bar{X}$. It  is closed and $S\setminus W\hookrightarrow S$ is dense  (\cite[Cor.6.11.3]{EGAIV(2)}). This defines the factorization
\ga{}{ \alpha: \bar X\setminus S\xrightarrow{\beta} \bar X\setminus W\xrightarrow{\gamma} \bar X\notag}
inducing the exact sequence 
\ga{}{ (\star) \ \ 0\to R^1\gamma_* \sO\to R^1\alpha_*\sO\to \gamma_* R^1\beta_* \sO .\notag }
For any variety $Z$, we denote by $Z^{(i)}$, resp. $Z^{(\ge i)}$   the set of codimension $i$, resp $\ge i$  points. 
By  \cite[ VIII, Cor.2.3]{SGA2}, since
\[\inf_{x\in \bar{X}\setminus W}\{{\rm depth}\,\sO_x+\dim\overline{\{x\}}-\dim W\cap \overline{\{x\}}\}>2,\]
which holds as $\bar{X}\setminus W$ is Cohen-Macaulay, and $W$ has codimension $\ge 3$ in $\bar{X}$,
the Zariski sheaf 
$R^1\gamma_*\sO=  \sH^2_W(\bar X, \sO)$ is coherent. So it is annihilated by a power $\sI_W^n$ of the ideal sheaf of $W$, thus by $\sO_{\bar X} (-mD)$ with $m$ large chosen so $\sI^n_W\supset \sO_{\bar X}(-mD)$.  The level raising map on $Q_a$ for some $a\ge 1$ respects the filtration defined by $(\star)$.  Thus for $a\ge m$ it sends $Q_1$ to 
\ga{}{ Q^{CM}_a={\rm Ker}  (H^0(\bar X, R^1\gamma_*\sO\otimes (aD))\xrightarrow{F-1} 
H^0(\bar X, R^1\gamma_*\sO\otimes (paD))) \subset Q_a \notag }
and we have to show that for $b$ large, the level raising map $Q_a^{CM}\to Q_{a+b}^{CM}$ is zero.
Points 
 $x\in \bar X^{(\ge c)} \cap (S\setminus W)$ have depth $\ge c$, thus  by \cite[III, Lem.~3.1, Prop.~3.3]{SGA2} $H^i_x(\bar X, \sL)=0$ for $i<c$,  and any invertible $\sL$ on $\bar{X}$. 
 Thus the restriction map to the finitely many points in $(\bar X)^{(2)}\cap ( S\setminus W)$ 
\ga{}{ R^1\beta_*\sO\hookrightarrow 
\oplus_{x\in (\bar X)^{(2)}\cap (\bar X\setminus W)} H^2_x( \bar X, \sO)\notag}
is injective. Denoting by $\sO_x$ the local ring of $\bar X$ at $x$, in which $D$ is defined by $g\in \sO_x$, the vertical arrows define isomorphism of complexes 
\ga{}{\xymatrix{\ar[d]_{g^a}  \sO(aD)_x \ar[r]^{F-1} & \sO(paD)_x \ar[d]^{g^{ap}}\\
\sO_x \ar[r]_{F-g^{ap-a}\cdot 1} & \sO_x
} \notag}

We set 
\ga{}{ Q_a(x)={\rm Ker}(H^2_x(\sO)\otimes \sO(aD) \xrightarrow{F-1}
 H^2_x(\sO)\otimes \sO(paD)) \notag\\
Q'_a(x)={\rm Ker}(H^2_x(\sO_x) \xrightarrow{F-g^{ap-a}\cdot 1} H^2_x(\sO_x)). \notag }
For every $b\ge 0$ we have a commutative diagram 
\ga{}{ \xymatrix{ \ar[d]_{g^a} Q_a(x) \ar[r]^{1} &  Q_{a+b}(x) \ar[d]^{g^{a+b}} \\
\sO_x\cdot Q'_a(x) \ar[r]^{g^b} & \sO_x\cdot Q'_{a+b}(x) }\notag}
where the upper horizontal arrow is the level raising map, the vertical maps are injective  and  $\sO_x\cdot Q'_m(x)$ is the $\sO_x$-submodule  of $H^2_x(\sO_x)$ generated by $Q'_m(x)$.
By \cite[Rmk.4.4]{Smi94},  $H^2_x(\sO_x)$ has one maximal Frobenius invariant  proper
$\sO_x$-submodule $K_x$,  the tight closure of $0$ in $H^2_x(\sO_x)$,  which in addition is  finite. On the other hand,   $F(\lambda y)=(\lambda^pg^{ap-a}) y$ for $\lambda\in \sO_x, y\in Q'_a(x)$, so that $\sO_x\cdot y$ has finite $\sO_x$-length, and is Frobenius invariant, for each such $y$; thus 
$\sO_x\cdot y\subset K_x$ for each such $y$. Hence $ \sO_x\cdot Q'_a(x) \subset K_x$ for any $a$.
Hence $\sO_x\cdot Q'_a(x)$ is annihilated by a power of the maximal ideal $\frak{m}^n_{ x} \subset \sO_x$, thus by $g^b$ for $b$ so large that $\frak{m}^n_{ x} \supset \sO_{\bar X}(-bD)$.  Thus the level raising map is zero for $b$ large. This finishes the proof.

\end{proof}

\begin{proof}[Proof of Theorem~\ref{thm:rk1}] By enlarging $D$ we may assume that $X$ is affine. By Lemma~\ref{lem:n} and Lemma~\ref{lem:nto1} it is enough to find a good curve for $\sS(j,1,\Delta,1)$ with $\Delta=mpD$ 
as in Proposition~\ref{prop:m}.
For $\bar C$ as in Lemma~\ref{lem:ESZ}, 
$A(\bar X, \Delta) \to A(\bar C, \bar C\cdot \Delta)$ is injective and by 
Proposition~\ref{prop:m} 
$A(\bar X, \Delta)\supset \sS(j,1, D,1)$. This finishes the proof.

\end{proof}

\begin{remark} \label{rmk:deligne}  We denote by $\sS(j_Y, 1, D\cap \bar Y)\supset \sS(j,1, D)$ the set of rank $1$ $\bar \Q_\ell$ local systems with ramification bounded by $D\cap \bar Y $ on $\bar Y$
where
$\bar Y$ denotes the smooth locus of $\bar X$ as in Proposition~\ref{prop:m}, and $D$ is a Cartier divisor on $\bar X$. 
Then in fact the curve $C$ constructed in Theorem~\ref{thm:rk1} is good for  $\sS(j_Y, 1,  D \cap \bar Y)$ as well. This enables to sharpen Deligne's question as to whether in higher rank there is a curve good for $\sS(j_Y,r,D \cap \bar Y) \supset  \sS(j,r,D)$.

\end{remark}

\section{Remarks}
Let $k, X, \bar X$  be as in  Section~\ref{intro}, such that  $\bar X$ is projective.
For any field extension $K\supset k$, if  
 $C_K$ is a smooth geometrically connected curve over $K$, 
 we denote  by 
$ \bar \eta \mapsto C_K$ a geometric point,  and by $\bar{K}$ an algebraic closure of $K$ in $K(\bar \eta)$.  
%For any closed point $x\in X$ we denote by $\frak{m}_x$ the maximal ideal of $\sO_{\bar X}$ at $x$. 
We make the following elementary remark.

\begin{remark} \label{rmk:lefschetz}
There is a purely transcendental extension $K/k$  of finite type  and a morphism $C_K\to X$  of a smooth  geometrically connected curve over $K$
such that the composed homomorphism
\ga{}{\pi_1^{\rm \acute{e}t}(C_{\bar{K}}, \bar \eta)\to \pi_1^{\rm \acute{e}t}(C_{K}, \bar \eta)\to
\pi_1^{\rm \acute{e}t}(X,\bar \eta) \notag}  is surjective. 
\end{remark}
In other words, the Lefschetz theorem in the strong form is true if we allow finite type field extensions, even only purely transcendental ones, and then the algebraic closure. So Deligne's problem whether or not the Lefschetz theorem is true after bounding $r$ and $D$ is really a question over  the original algebraically closed field of definition.

\begin{proof}
%Let $\sH$ be an ample line bundle on $\bar X$.  By semicontinuity of coherent cohomology and Serre %vanishing (REFERENCE?) one has the well known lemma.
%\begin{lem} \label{lem:van}
%There is a positive natural number $N$ such that for any closed point $x\in \bar X$, $\frak{m}_x\otimes %%%\sH^N$ is generated by global sections and $H^1(\bar X, \frak{m}_x\otimes \sH^N)=0$.  We define the %incidence variety $\sC\hookrightarrow \bar X\times_k \P$ as $\sC=\{(x, D_1,\ldots, D_{d-1}), \ x\in %D_1\cap \ldots \cap D_{d-1}$\}.  
%\end{lem}
%Set $\P= \P(H^0(\bar X, \sH^N))^{\oplus (d-1)}$.   By Lemma~\ref{lem:van} the morphism $\sC\to \bar X$ is %a projective bundle, thus induces an isomorphism $\pi_1^{\rm \acute{e}t}(\sC, y)\to \pi_1^{\rm \acute{e}t}%(X,y)$ of fundamental groups based at some geometric point $y\mapsto \sC$. 
%Let $K=k(\P)$. Then choosing $y =\bar \eta$ factoring through $C_K:=\sC\times_{\P} {\rm Spec}(K)\to \sC$, %the composite homomorphism
%\ga{}{ \pi_1^{\rm \acute{e}t}(C_K, \bar \eta)\to \pi_1^{\rm \acute{e}t}(\sC, \bar \eta)\xrightarrow{\cong}  %\pi_1^{\rm \acute{e}t}(X,\bar \eta) \notag}
%is surjective.  This finishes the proof. 

We may assume $X$ is affine, by passing to a dense  open subset if necessary. Let $f=(f_1,\ldots,f_n):X\to \A^n_k$ be an embedding. In particular, $f$ is unramified. 
As in \cite[Section 6.5]{Jou83},  we consider 
\[Z^{(d-1)}_X\subset X\times_k (\A^{n+1})^{d-1}\]
defined by
\[u_{i0}+\sum_{j=1}^n u_{ij}f_j(x)=0,\;\;1\leq i\leq d-1,\]
where $(u_{ij})_{0\leq j\leq n}$ are the coordinate functions on the $i$-th copy of $\A^{n+1}$. By \cite[Thm.~6.6]{Jou83}, the projection $\pi_X:Z^{(d-1)}_X\to \A^{(n+1)(d-1)}_k$ is dominant. 
 By \cite[6.5.3]{Jou83}, the other projection $Z^{(d-1)}_X\to X$ is a trivial fibre  bundle  with fibre  $(\A^n)^{d-1}$.  In particular $Z^{(d-1)}_X$ is smooth over $k$, thus
 the generic fibre $C_K$ of 
 $\pi_X$ is smooth over $K=k(  \A^{(n+1)(d-1)}_k )$.
 As $\dim f(X)\ge (d-1)+1=d$,      $C_K$ 
 is  geometrically integral by  \cite[Thm.~6.6 (3)]{Jou83}, of dimension 
$\dim Z^{(d-1)}_X-(n+1)(d-1)=1$. 
By construction  $C_K\subset X_K$ is a closed embedding. 

\medskip
\noindent
To prove that $C_K$ has the desired property, we have to show that if $\varphi:Y\to X$ is any connected finite \'etale covering, then $Y\times_XC_{\bar K}$ is connected. Let  $f_Y=f\circ \varphi:Y\to \A^n_k$ be the composition, which is again unramified. Then the construction of \cite[Section 6.5]{Jou83} applied to $f_Y$ gives rise to a similarly defined 
$Z^{(d-1)}_Y\subset Y\times_k (\A^{n+1})^{d-1}$
and a corresponding smooth and geometrically integral curve $C(Y)_K$, which is the geometric generic fibre of $Z^{(d-1)}_Y\to (\A^{n+1})^{d-1}$. By definition
\ga{}{ Z^{(d-1)}_Y=Z^{(d-1)}_X\times_XY, \ Y_K \supset C(Y)_K =C_K\times_XY=C_K\times_{X_K}Y_K.\notag}
Hence as claimed, the inverse image $C(Y)_{\bar K}$ of $C_{\bar K}$ under $\varphi\times_k \bar K: Y_{\bar K}\to X_{\bar K}$ is integral, in particular connected. This finishes the proof. 

\end{proof}

\bibliographystyle{amsalpha}
 
\end{document}